\begin{document}

\title{Optimal Fillings
}
\subtitle{A new spatial subdivision problem related to packing and covering}


\author{Carolyn L. Phillips         \and
        Joshua A. Anderson \and Elizabeth R. Chen \and Sharon C. Glotzer 
}


\institute{C. L. Phillips  \at
               Applied Physics, University of Michigan
                \and
           J. A. Anderson  
           \at
               Chemical Engineering, University of Michigan
           \and
            E. R. Chen 
                       \at
               Chemical Engineering, University of Michigan
            \and
	     S. C. Glotzer
	     \at
              Applied Physics, Department of Materials Science and Engineering, Chemical Engineering, University of Michigan\\
               \\
              \email{sglotzer@umich.edu}           
}

\date{13 August 2012}

\maketitle

\begin{abstract}
We present {\it filling} as a new type of spatial subdivision problem that is related to covering and packing. Filling addresses the optimal placement of overlapping objects lying entirely inside an arbitrary shape so as to cover the most interior volume.  In $n$-dimensional space, if the objects are polydisperse $n$-balls, we show that solutions correspond to sets of maximal $n$-balls and the solution space can reduced to the medial axis of a shape.  We examine the structure of the solution space in two dimensions. For the filling of polygons, we provide detailed descriptions of a heuristic and a genetic algorithm for finding solutions of maximal discs.  We also consider the properties of ideal distributions of $N$ discs in polygons as $N\rightarrow \infty$. 

\keywords{Polygons \and Filling \and Packing \and Covering}
 \PACS{ 47.57.J- \and  02.70.-c  \and 47.57.Bc \and  88.80.ht}
 \subclass{05C70 \and 52C15 \and 52C17}
\end{abstract}

\section{Introduction}
\label{intro}

First introduced in reference \cite{phillips}, we define \emph{filling} as the problem of packing overlapping objects inside of a defined shape such as to optimally cover the interior volume without extending beyond the boundary of the shape.  We are primarily interested in the optimal filling of an $n$-dimensional shape with a well-defined $n-1$ surface with $n$-dimensional polydisperse balls.

The filling problem can be expressed by the following two questions:

\begin{problem}
Given a compact region $G$ (having non-empty interior and no holes in the interior) and a fixed positive integer $N$, how can $N$ balls of varying radii be placed completely interior to $G$ so as to  maximize the total volume covered?  Overlaps of the $N$ balls are permitted.
\end{problem}

\begin{problem}
In general, for each fixed shape $G$, what is the best strategy for maximizing the fraction of volume covered by the minimum number of balls of varying radii in G?
\end{problem}


In the deceptively simple problem of determining the optimal set of balls to fill an arbitrary shape we find a surprisingly rich problem with many open questions.   In this paper we address the above two questions. In Section \ref{sec:genprop} we define the filling problem and the basic terminology necessary to discuss the problem.  We describe the mathematical structure of the filling solution space and show that it can be reduced from dimension $n+1$ to dimension $n-1$.  We characterize the forms of degeneracy possible in filling solutions and demonstrate how each individual ball contributes to the filling of a shape.  In Section \ref{sec:2D}, we first restrict the filling problem to planar shapes.  We then identify features of the solution space that affect the properties of an optimal solution and the methods for finding solutions.  We introduce the concept of neighbors and show how a fixed point in the solution space divides the solution space into independent spaces.  We show how special points in the solution space are found in many solutions.   We then restrict the problem further to polygons, where the structure of the solution space can be reduced to a small number of cases.  We numerically explore the solution space of a simple construction of three discs, and show the solution space is complex with many local maxima and topologically diverse configurations.  In Section \ref{sec:algs} we detail two algorithms for generating filling solutions for polygons, a genetic algorithm and a heuristic algorithm.  The genetic algorithm utilizes a minimal set of assumptions about the solution space.  The heuristic algorithm exploits the known structure of the solution space and also relies on several conjectures.  We discuss the relative efficiency of the heuristic algorithm in searching the solution space and the good correspondence between the two algorithms. In Section \ref{sec:contlim} we find the distribution of discs in a polygon at the continuum limit, or as $N\rightarrow\infty$.  The derived analytical expressions may be used to approximate solutions for finite but large $N$. We derive an expression for the fractional allocation of discs over the medial axis branches an arbitrary polygon and an exact expression for a triangle as $N\rightarrow \infty$.   In Section \ref{sec:conclusion}, we provide concluding remarks.  In Section \ref{sec:glossary}, we include a glossary of terms defined in this paper.

\section{General Properties of the Medial Axis of $G$ and Filling Solutions} \label{sec:genprop}

\subsection{Definitions and Theorems}

Let $G$ be a compact (closed and bounded), simply-connected $n$-dimensional region with a non-empty interior.  Let $S$ be the boundary of G: $S$ = $\delta$ $G$.  As in reference \cite{blum1978}, we restrict $S$ to have a tangent and curvature defined everywhere but at a finite number of points.  At these points, sided curvature, i.e. a limit performed only on one side of the point, exists from any direction along the boundary.  For simplicity, we do not consider $G$ with holes, nor $G$ with a boundary that abuts itself.

\begin{definition}  Let $R_N$ be a set containing $N$ balls $D_i$ that are completely contained in $G$.  Each $D_i$ has a radius $r_i$ and center $\bf{x}_i$.  $R_N$ is a \emph{filling} solution of $G$.  Let $\phi(R_N,G)$ be the fraction of $G$ that is covered by $R_N$. $\phi$ is the measure of the filling  $G$ by the set $R_N$.
\end{definition}

The measure $\phi$ is equal to the volume of the union of balls of $R_N$ divided by the volume of $G$, thus $\phi \leq 1$ by definition.  $\phi$ is equal to unity for $N < \infty$ only if $G$ is equivalent to a finite number of overlapping balls.   The space of all $R_N$ is of dimension $n+1$.

\begin{definition} If, $\forall D_i \in R_N$, $\phi(R_N-\{D_i\},G) < \phi(R_N,G)$, then the set is \emph{all-filling}.   In other words, each ball in $R_N$ uniquely fills a non-zero volume of $G$.
\end{definition}
 
\begin{definition}
 A set $R_N$ is an \emph{optimal filling} solution of $G$ if there is no other set $R'_N$ that satisfies $\phi(R'_N, G) > \phi(R_N,G)$.
\end{definition}

The function $\phi$ can be defined over the space of all $R_N$ for a shape $G$ and fixed positive $N$.  Our objective is to find the set $R_N$ with the maximum value of $\phi$.   We will now prove that the solution space can be restricted to sets of $R_N$ containing only \emph{maximal} $n$-\emph{balls}, a space defined by the  \emph{medial axis} of $G$ and its associated \emph{radius function}.

 The medial axis of an object, originally defined by Blum in reference \cite{blum1967},  and also known as the topological or medial skeleton, is the set of all points having more than one closest point on the object's boundary.  We use the notation $M(G)$ for the medial axis of $G$.  The medial axis is a reduction of an $n$-dimensional shape into an $n-1$-dimensional space, the locus of centers of the maximal $n$-balls.   A maximal $n$-ball is defined as a ball that is tangent to the boundary at two or more points.  It is also a ball contained completely in $G$ that is not a proper subset of any other ball also contained in $G$.   A shape is the logical union of all its maximal $n$-balls.    The radius function associated with $M(G)$ is a continuous, non-negative function defined at each point of $M(G)$ as the radius of the maximal $n$-ball centered at that point.  The medial axis and the radius function together are a complete shape descriptor \cite{blum1978} and can be used to reconstruct the shape.

\begin{theorem}
For $G$, there exists optimal filling solutions $R_N$ that contain only maximal balls. \label{thm:maximal}
\end{theorem}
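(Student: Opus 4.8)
The plan is to start from an arbitrary optimal filling solution $R_N$ and show that each ball can be replaced, one at a time, by a maximal ball without decreasing $\phi$. First I would observe that if $D_i \in R_N$ is not maximal, then by the definition of a maximal ball, $D_i$ is a proper subset of some ball $D_i'$ that is still completely contained in $G$. Replacing $D_i$ by $D_i'$ yields a new set $R_N'$ whose union contains the union of $R_N$, hence $\phi(R_N',G) \geq \phi(R_N,G)$; since $R_N$ was optimal, equality holds and $R_N'$ is again optimal. The key point is that enlarging one ball to contain it cannot shrink the covered volume, so optimality is preserved.

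Next I would argue that this enlargement step can be iterated to termination. For a single ball, one can enlarge it monotonically until it is no longer a proper subset of any ball contained in $G$, i.e. until it becomes maximal; concretely, take the supremum of radii of balls in $G$ that contain the current ball and use compactness of $G$ (and continuity of the containment condition) to realize that supremum by an actual ball, which is then tangent to $S$ at two or more points, hence maximal. Doing this successively for $i = 1, \dots, N$ produces a solution $R_N^\ast$ in which every ball is maximal, and at each step $\phi$ is non-decreasing, so $\phi(R_N^\ast,G) \geq \phi(R_N,G)$; combined with optimality of $R_N$ this forces $\phi(R_N^\ast,G) = \phi(R_N,G)$, so $R_N^\ast$ is an optimal filling solution consisting only of maximal balls.

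The main obstacle I anticipate is the step asserting that a ball which is maximal with respect to inclusion among balls contained in $G$ is tangent to $S$ at two or more points — equivalently, reconciling the two characterizations of ``maximal ball'' given in the text (inclusion-maximal versus doubly-tangent). A ball contained in $G$ that touches $S$ at only one point can still be enlarged: one can push its center slightly away from that tangent point and grow the radius, staying inside $G$, contradicting inclusion-maximality. Making this rigorous requires the regularity hypotheses on $S$ (tangent and curvature defined except at finitely many points, with one-sided curvature at the exceptional points) to rule out pathologies, and a short local argument near the single contact point. A secondary, more minor point is handling balls $D_i$ in the original $R_N$ that are already maximal (no enlargement needed) and balls that contribute zero volume (which may simply be discarded or enlarged arbitrarily), but neither affects the monotonicity of $\phi$.
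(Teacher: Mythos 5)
Your proposal is correct and follows essentially the same route as the paper: replace each non-maximal ball by a maximal ball containing it, and observe that the union of the balls can only grow, so $\phi$ is non-decreasing and optimality is preserved. The paper obtains the containing maximal ball by an explicit two-step construction (grow the ball concentrically until it first touches $S$, then grow it cotangently until it touches $S$ at a second point), which directly produces the double tangency you identify as the main obstacle, whereas your supremum-plus-compactness argument reaches the same conclusion slightly more abstractly.
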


\begin{proof}
From any filling solution set that has a ball that is not on the medial axis, we can construct a solution set that contains only balls on the medial axis.
Assume we have a solution set $R_N$ that contains a ball $D$ that is not tangent to $S$ at any point.  That ball is completely contained inside a concentric ball that is tangent to at least one point of $S$.   And that ball is completely contained inside a larger cotangent ball that is also tangent to a second point of $S$.   This last ball $D'$ has its center on some part of the medial axis by construction and is thus a maximal ball.  Let $R'_N$ be the set of balls where $D$ is replaced by $D'$.  It must be that $\phi(R'_N,G)\geq \phi(R_N,G)$.  So if $R_N$ is an optimal filling solution, then so is $R'_N$.
\end{proof}

While Theorem \ref{thm:maximal} implies that filling solutions can be restricted to sets of maximal balls, it does not follow that optimal solutions must be composed of maximal balls for all shapes.  Shapes that have boundaries with concave points of infinite curvature can have optimal filling solutions with non-maximal balls.   Figure \ref{oddcases}(a) shows such a shape.  The outer boundary of the shape in Figure \ref{oddcases}(a) is defined by four circular arcs.  The dashed (green online) line is the medial axis of this shape.  If the four discs at the extreme points of the medial axis have been placed, then there is no need for the final disc placed inside the shape to be a maximal disc.  

Even when restricted to optimal solutions of maximal balls, the entire medial axis need not be occupied as $N\rightarrow \infty$.   Figure \ref{oddcases}(b) is an example of a concave shape with two concave points.  The portion of the medial axis between the two circle centers (red online) need not be occupied by disc centers to fill the shape as $N\rightarrow \infty$.

\begin{theorem} If $S$ contains no concave points of infinite curvature, then optimal fillings $R_N$ composed of maximal balls are also all-filling.  Only filling  solutions of maximal balls can be optimal.  The entire medial axis is occupied for optimal filling solutions as $N \rightarrow \infty$. \label{thm:onlymaximal}
\end{theorem}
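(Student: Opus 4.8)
The plan is to prove the three assertions in sequence, each building on the previous, and in each case to use the hypothesis "no concave points of infinite curvature" to rule out the pathologies illustrated in Figure \ref{oddcases}. The core geometric fact I would establish first is a quantitative strengthening of the construction in Theorem \ref{thm:maximal}: if $D$ is a ball in $G$ that is \emph{not} maximal, then the maximal ball $D'$ obtained by the inflate-and-slide procedure is \emph{strictly} larger, i.e. $r' > r$, with the single exception handled below. This follows because the procedure terminates only when the expanding ball touches the boundary a second time, and before that moment its radius is strictly increasing; the only way to get $r' = r$ is if the inflation step cannot even begin, which happens when $D$ is already internally tangent to $S$ at a point $p$ \emph{and} the curvature of $S$ at $p$ prevents any sliding — precisely the concave-point-of-infinite-curvature situation, which is excluded by hypothesis. (At an ordinary boundary point, or a concave point of finite curvature, one can always slide along the medial axis toward the locus of larger maximal balls.) So under the hypothesis, replacing any non-maximal $D$ by $D'$ strictly increases the volume it contributes, hence strictly increases $\phi$ unless that added volume was entirely redundant.

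For the first claim (optimal fillings of maximal balls are all-filling), I would argue by contradiction: suppose $R_N$ is optimal, consists of maximal balls, but some $D_i$ contributes zero unique volume, so $\phi(R_N - \{D_i\}, G) = \phi(R_N, G)$. The idea is that a maximal ball of positive radius always has a point — for instance a point of tangency with $S$, or more robustly a small cap near such a point — that lies outside every \emph{other} maximal ball in the configuration, because two distinct maximal balls cannot contain the same boundary point in their interiors (a maximal ball meets $S$ only tangentially, from inside). Hence $D_i$ contributes positive unique volume, contradicting redundancy. Here the absence of infinite-curvature concave points is again what guarantees $D_i$ is genuinely maximal and genuinely tangent to $S$, not merely a ball squeezed into a cusp whose "tangency" is a single boundary point of the shape that other balls also reach.

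The second claim (only maximal-ball fillings can be optimal) then follows from the strict-monotonicity fact: given any optimal $R_N$ containing a non-maximal $D$, the swap produces $R'_N$ with $\phi(R'_N,G) > \phi(R_N,G)$ by the strengthened construction above, contradicting optimality — so every optimal filling is composed entirely of maximal balls. For the third claim (the entire medial axis is occupied as $N\to\infty$), I would show that any point $m \in M(G)$ that is bounded away from all $N$ disc centers has a neighborhood on $M(G)$ whose associated maximal balls cover a region of $G$ not yet covered; placing one more disc there strictly increases $\phi$, so in the limit no such gap can persist. This uses that the radius function is continuous and positive on $M(G)$, and that each maximal ball near $m$ has the private boundary cap described above. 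The main obstacle I anticipate is making the "private cap" and "can always slide toward larger balls" arguments uniform and rigorous near branch points and endpoints of the medial axis, and in verifying carefully that "no concave point of infinite curvature" is exactly the hypothesis needed to exclude both Figure \ref{oddcases}(a) (non-maximal ball in an optimal solution) and Figure \ref{oddcases}(b) (an unoccupied medial-axis segment); the rest is a packaging of the Theorem \ref{thm:maximal} construction with strict inequalities.
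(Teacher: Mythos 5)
Your overall skeleton matches the paper's: grow a non-maximal ball to a maximal one, argue that a cap near a tangency point with $S$ is privately covered, and use uniqueness of the maximal ball tangent at each smooth boundary point for the $N\rightarrow\infty$ claim. But your first paragraph misplaces where the hypothesis ``no concave points of infinite curvature'' does its work, and this leaves the decisive step of the second claim with a gap. The inflate-and-slide procedure produces a strictly larger ball, $r'>r$, for \emph{every} non-maximal $D$ in \emph{every} $G$ --- that is just Theorem \ref{thm:maximal}, and no curvature assumption is needed for it, since a non-maximal ball is by definition a proper subset of a larger ball in $G$. The counterexample of Figure \ref{oddcases}(a) is not a ball that ``cannot slide''; the red disc there can perfectly well be replaced by a strictly larger maximal disc. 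The problem is that the extra volume $D'\setminus D$ is already covered by the other discs, so $\phi$ does not increase. Your paragraph 1 ends by conceding exactly this loophole --- ``unless that added volume was entirely redundant'' --- and your argument for the second claim then cites ``the strengthened construction above'' as if the loophole had been closed. It has not been.

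The paper closes it as follows, and this is where the curvature hypothesis actually enters. By construction $D'$ is tangent to $S$ at a point $p$ where $D$ was not. If $p$ is a smooth boundary point (guaranteed when $S$ has no concave points of infinite curvature), then $D'$ covers a positive-area region near $p$ that can only already be covered by a ball of $R_N$ tangent to $S$ at $p$ with radius at least $r'$; any such ball contains $D'$ and hence $D$, so $D$ contributed nothing and $R_N$ was not all-filling. At a concave point of infinite curvature this fails because infinitely many maximal balls, none containing another, all pass through the same boundary point --- which is precisely what Figure \ref{oddcases}(a) exploits. Your paragraph 2 actually contains the right geometric ingredient (the private cap of Figure \ref{fig:tangency}), so the fix is to route the second claim through that cap together with the all-filling/redundancy dichotomy, rather than through strict growth of the radius alone. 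Your sketch of the third claim is essentially the paper's argument.
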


\begin{proof}
Assume there is an all-filling filling solution $R_N$ for a shape $G$ that contains a ball $D$ that is not a maximal ball.  The operations from Theorem \ref{thm:maximal} are used to construct a maximal ball $D'$ on the medial axis from the ball $D$.    The disc $D'$, by construction, is tangent to the boundary of $S$ in at least one location that $D$ was not.  Assuming that $S$ has no concave points of infinite curvature (e.g a reflex vertex for a polytope), then the point of tangency is smooth and there is a small region around the point of tangency that $D'$ covers that $D$ did not (Figure \ref{fig:tangency}).  That region can only already have been covered by a ball in $R_N$ if a ball contained in $R_N$ of equal or larger radius, was tangent to $S$ the same point.  But since disc $D'$, and therefore $D$, would be completely contained in that ball, then $R_N$ would not be all-filling.  So $\phi(R'_N,S)$ is strictly greater than $\phi(R_N,S)$.  If each point on the boundary $S$ is tangent to a maximal ball at a smooth point, then there is one maximal ball tangent to S at that point, so each point of S maps to a one and only one point on the medial axis.  Also, a unique infinitesimal volume is covered by that maximal ball, so the entire medial axis must be occupied for the optimal fillings as $N \rightarrow \infty$.
\end{proof}

\begin{figure*}[htb]
\begin{center}
\includegraphics[width=0.75\textwidth]{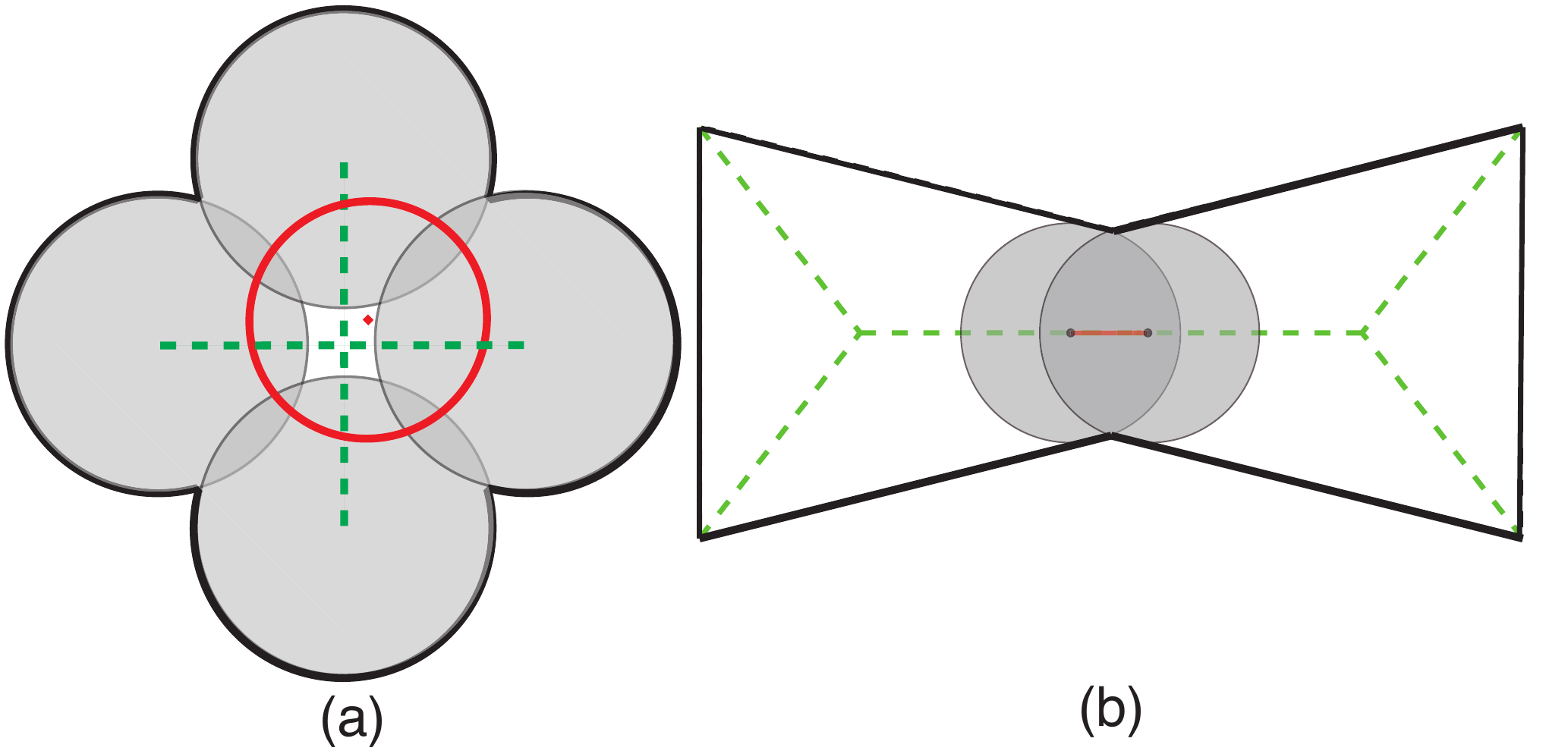}
\caption{(a)The construction has optimal solutions without maximal balls.  The center of the red disc need not be on the medial axis (dashed green) for the shape to be completely covered.  (b) The construction need not have all of its medial axis (dashed green) filled.  A disc added to the red portion fills no additional area.}
\label{oddcases}
\end{center}\end{figure*}

\begin{figure*}[htb]
\begin{center}
\includegraphics[width=0.75\textwidth]{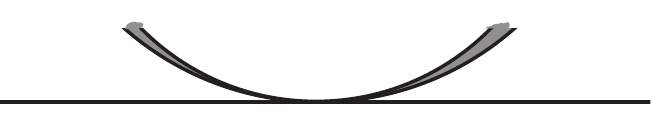}
\caption{ If the point of tangency is smooth in any direction, the largest ball tangent to the point covers more volume than any smaller ball tangent to the point. \label{fig:tangency}}
\end{center}\end{figure*}

Theorem \ref{thm:maximal} shows that to construct filling solutions, the search space can be restricted to the space of maximal balls.  Theorem \ref{thm:onlymaximal} shows that for $G$ with $S$ without concave points of infinite curvature, optimal filling solutions consist only of maximal balls.   Searching for optimal fillings has been reduced from finding points in an $n+1$ dimensional space (disc center position and radius) to finding points on an $n-1$ dimensional surface, or a problem of dimension $n-1$.

In practice, this surface is better described as a set of bounded connected surfaces.  For example, a planar shape has a medial axis that forms a planar graph, a connected set of curves that meet at points.  In three dimensions, a medial axis is composed of sheets, seams, and junctions\cite{Sheehy,Culver,Culver2004}.    

In the following sections we shall implicitly assume all $R_N$  solutions being discussed are all-filling.

\begin{figure*}[htb]
\begin{center}
\includegraphics[width=0.75\textwidth]{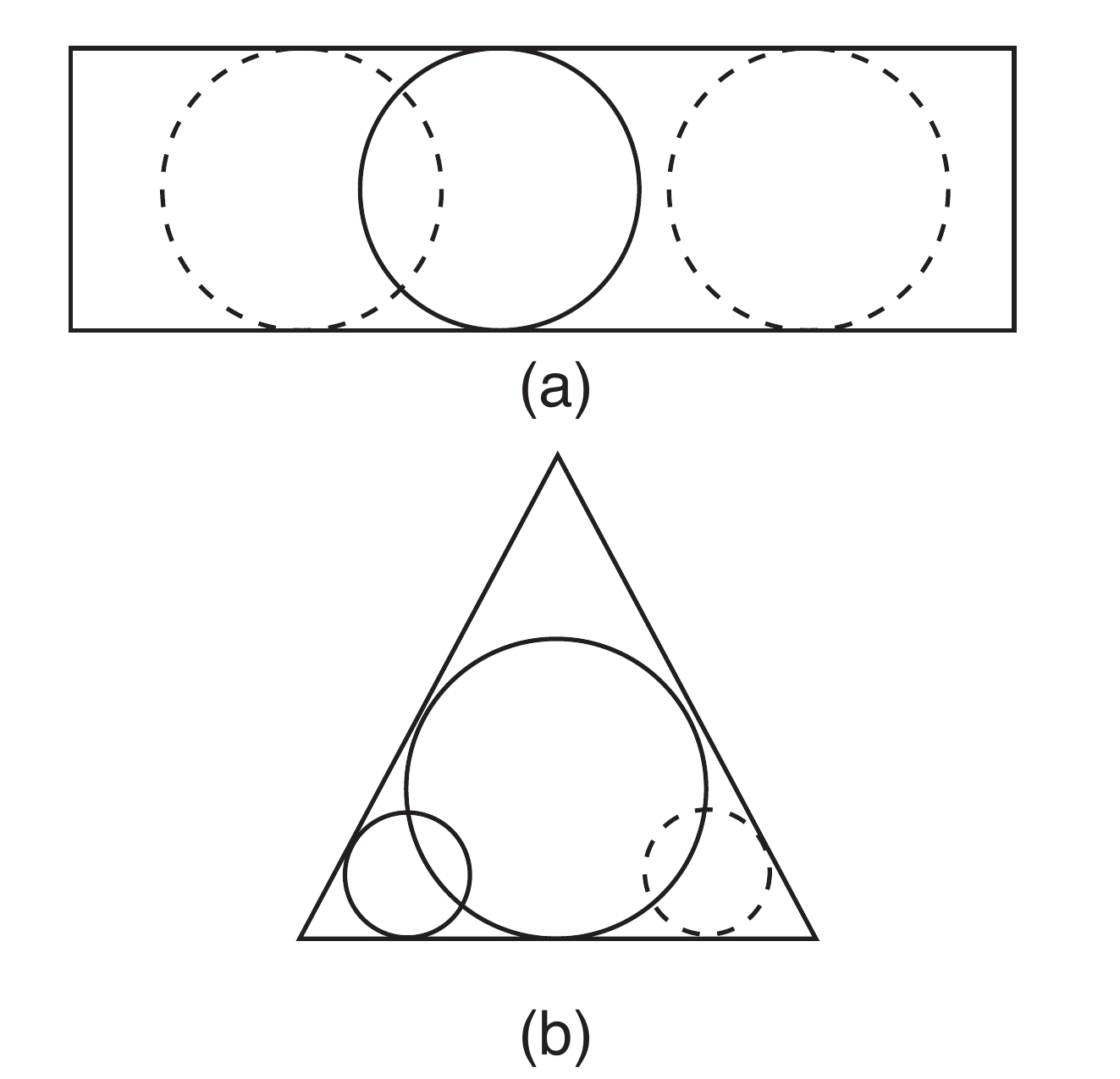}
\caption{Two examples of degenerate solutions.  For (a) a single maximal ball can be placed in infinitely many locations in a rectangle. For (b), the symmetrical triangle, the asymmetrical solution can be reflected to create a degenerate solution.} \label{fig:degenerate}
\end{center}\end{figure*}

For an arbitrary shape G, there is no guarantee that optimal filling solutions must be unique.   For example, Figure \ref{fig:degenerate} shows how a long rectangle can have an infinite number of $N=1$ solutions.  All discs added to the middle of the rectangle have the same diameter.  When sufficiently many discs have been added to the rectangle so that the discs must overlap, this form of degeneracy disappears.  Degenerate solutions also occur in symmetrical shapes with asymmetrical optimal filling  solutions, such as the $N=2$ solution shown for the triangle in Figure \ref{fig:degenerate} .  This form of degeneracy does not disappear as $N \rightarrow \infty$.  It is also possible for a shape with no symmetries to have two distinct optimal filling  solutions with the same filling. The likelihood of two distinctly different sets of discs both to be optimal and have the same $\phi$ is unlikely, and thus this form of degeneracy is also likely to be extremely rare.

\subsection{Filling contribution of a single ball}

The contribution of a single ball to the measure $\phi$ is equal to the volume of the ball offset by its fractional share of the volume of any overlap with other balls. 
More explicitly, the contribution of a single ball is the volume it uniquely covers plus $1/i$ of the volume it shares with exactly $i$ other balls. Let $V'_i(D_k)$ be the domain that a ball $D_k$ shares with exactly $i$ other balls, including itself.  Let $\mathcal{M}_V(V)$ be the measure of the volume of a domain.  Note that $V'_1(D_k)$ is the domain uniquely covered by the ball $D_k$.  The contribution $\mathcal{C}(D_k)$ is,

\begin{equation}
\mathcal{C}(D_k) = \sum^\infty_{i = 1} \frac{1}{i} \mathcal{M}_V\left(V'_i(D_k)\right)
\end{equation}
and,
\begin{equation}
\phi(R_N) = \frac{1}{\mathcal{M}_V(G)} \sum^{N}_{k=1} \mathcal{C}(D_k) \label{eqn:totalphi}.
\end{equation}

In general, if the overlap between balls $P$ and $Q$ is completely contained inside of the overlap between balls $Q$ and $R$, then locally  adding, removing, or displacing ball $P$ cannot uncover any of that overlap volume.  So if ball $P$ is added, removed, or locally displaced, the contribution of other balls may change but the only term that changes in the summed contributions of all the balls is $\mathcal{M}_V\left(V'_1(P)\right)$.

\section{Planar Shapes and Polygons} \label{sec:2D}
We now restrict the problem to planar shapes, $G$, whose medial axes are the locus of the centers of maximal discs.  Various algorithms exist to compute the medial axis of simple polygons (polygons that are not self-intersecting) and planar regions bounded by line segments, circular arcs, and general nonuniform rational B-spline~\cite{Vilaplana,preparata,gandp}.

We define the terminology and review the properties for an $M(G)$ of a planar shape introduced by Blum and Nagel~\cite{blum1978}.
$M(G)$ consists of connected subsets of points that form a 1-D planar graph.  Most points of $M(G)$ are \emph{normal points}, whose maximal disc is in contact with the boundary at two separate but contiguous sets of points.  $M(G)$ also contains a finite number of \emph{branch points}, each of which has a maximal disc in contact with the boundary at three or more separate but contiguous sets of points, and a finite number of \emph{end points}, whose maximal disc is in contact with the boundary at only one contiguous set of points.  For all but a finite number of discs, the contiguous set of points is a single point of contact.  The contact point consists of more than just a single point if the radius of curvature  the disc and boundary are the same.  As long as $G$ has no holes, then the graph $M(G)$ forms a tree with no loops.   $M(G)$ can be divided into sets of contiguous normal points bounded by branch or end points, such that the division is unique, disjoint, and complete.  Sets of contiguous normal points shall be referred to as a \emph{branch}.  The boundary of $S$ can be divided into parts associated with each branch by the intersection of $S$ with the set of maximal discs defined over a branch.  This division of $S$ is also unique, disjoint, and complete\cite{blum1978}.  The shape and radius function of any branch of $M(G)$ is determined by the \emph{parents} of the branch, that is, the two contiguous sections of $S$ associated with the given branch. Given a sequence of maximal discs on the branch, their respective points of contact with $S$ and their centers on the branch are traversed in the same order (assuming the branch is traversed in the correct direction relative to $S$).   

\begin{definition}
Given a filling solution $R_N$, and $D \in R_N$ for a $M(G)$ planar graph, we define the \emph{neighbors} of a maximal disc $D$, as the maximal discs whose centers are the closest along paths in $M(G)$ originating at the center of $D$.  
\end{definition}
In other words, if there is a path in $M(G)$ that connects the center of disc $D$ to the center of disc $D'$ without traversing another disc center, then disc $D$ and disc $D'$ are neighbors (Figure \ref{fig:neighbors}).  For $G$ with no holes, where $M(G)$ has no loops, there is only one path connecting any two disc centers. When a branch of the medial axis is populated with many maximal discs, most of the discs have exactly two neighbors, the disc to the left and right of them in sequence.  If a branch point has connectivity $n$, then, in a densely populated $M(G)$ the disc closest to that branch point has $n$ neighbors.  A disc can theoretically have as many neighbors as $M(G)$ has end points with connectivity 1.  

\begin{theorem} Any overlap of disc $D$ with any disc that is not a neighbor of $D$ must be contained inside the overlap of $D$ with one of its neighbors.  \label{thm:neighbors}
\end{theorem}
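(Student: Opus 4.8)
The plan is to argue via the geometry of maximal discs and their contact points with $S$, using the ordering property of contact points along a branch that Blum and Nagel establish. Let $D$ be a maximal disc with center $c$, and let $D''$ be a maximal disc with center $c''$ that is \emph{not} a neighbor of $D$. Since $M(G)$ has no loops, there is a unique path $\gamma$ in $M(G)$ from $c$ to $c''$, and because $D''$ is not a neighbor, this path must pass through the center $c'$ of at least one other disc $D' \in R_N$; choose $D'$ to be the neighbor of $D$ lying on $\gamma$ (i.e. the first disc center encountered when leaving $c$ along $\gamma$). The claim is then that $D \cap D'' \subseteq D \cap D'$.

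First I would reduce to a statement about which points of $S$ are ``seen'' by each disc. For a point $p$ in the interior, $p \in D \cap D''$ forces $p$ to be close to both $c$ and $c''$; I want to show $p$ is then at least as close to $c'$, so that $p \in D'$ as well (recall $|c'p| \le r'$ is what membership in $D'$ means, and all discs are maximal so their radii are the respective values of the radius function). The key geometric fact is that the center $c'$ lies on the path between $c$ and $c''$ in $M(G)$, and along a branch the radius function and the centers vary monotonically relative to the associated boundary arcs. Concretely, the maximal disc $D'$ separates the shape: the portion of $S$ ``beyond'' $c''$ (on the far side of $c'$) is screened from the portion ``beyond'' $c$ by the disc $D'$ itself, because every chord from one side to the other must cross $D'$. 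So any lens-shaped overlap region $D \cap D''$, whose points lie near both far-apart boundary pieces, must lie inside $D'$.

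The cleanest way to make this rigorous, and the step I expect to be the main obstacle, is handling branch points: if the path $\gamma$ from $c$ to $c''$ traverses one or more branch points, the ``monotone ordering of contact points'' argument applies branch-by-branch, and I must check that the separating property of the intermediate neighbor disc $D'$ still screens $D$ from $D''$ across the junction. I would argue that the maximal disc centered at any point of $\gamma$ between $c$ and $c'$ already contains $D\cap(\text{everything past }c')$; since $c'$ itself is such a point and its disc is exactly $D'$, we get $D \cap D'' \subseteq D'$. Combined with $D\cap D''\subseteq D$ trivially, this gives $D \cap D'' \subseteq D \cap D'$, as desired. A short lemma isolating the ``a maximal disc on the path separates the two end regions of $S$'' claim — provable from the definition of the medial axis as the locus of centers of inscribed discs maximal under inclusion, plus the fact that $G$ is simply connected — would carry the whole argument, and the theorem follows by taking $D'$ to be the neighbor on $\gamma$.
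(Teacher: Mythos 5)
Your proposal is correct in outline but takes a genuinely different route from the paper. Both arguments rest on the same Blum--Nagel ingredient --- traversing $S$, the contact points of $D$, $D'$, $D''$ are encountered in the nested order $D, D', D'', D'', D', D$ --- but they finish differently. The paper stays with explicit circle geometry: it labels the two corners $I_{far}$, $I_{close}$ of the lens $D\cap D''$, uses the boundary ordering to force those corners into $D'$, and then uses maximality of $D$ (it can be contained neither in $D'$ alone nor in $D'\cup D''$) to confine the center of $D'$ to a region of the quadrant decomposition from which $I_{far}$ is the farthest point of the lens, so the whole lens lies in $D'$. You instead invoke a topological separation lemma: $D'$ contains a crosscut of the simply connected region $G$ (an arc joining two of its contact points with $S$), so $G\setminus D'$ splits into components whose boundary arcs contain the contacts of $D$ and of $D''$ respectively; since $D\setminus D'$ and $D''\setminus D'$ are each connected (a disc minus a disc is connected) and touch $S$ only at their own contact points, they lie in different components, are therefore disjoint, and hence $D\cap D''\subseteq D'$. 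That separation lemma is the one thing you leave unproved, and closing it still requires the contact-point ordering (to know the contacts of $D$ and $D''$ fall on different arcs of $S$ relative to $D'$); also, two of your intermediate justifications are not the right mechanism --- a point of $D\cap D''$ need not be closer to $c'$ than to $c$ or $c''$, and the lens need not lie near the boundary at all --- and should be replaced by the connectedness argument just described. Once repaired, your route is arguably cleaner and more robust than the paper's: it handles the case $D\cap D'=\emptyset$ (where it forces $D\cap D''=\emptyset$) and paths through branch points uniformly, and it is not intrinsically planar, whereas the paper's quadrant-and-farthest-point computation is.
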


Thus to measure $\mathcal{M}_V\left(A'_1(D)\right)$ for a disc $D$, only the position of the neighbors of $D$ need be accounted for.  To show the latter is true, we draw upon the properties of a planar medial axis.

\begin{figure*}[htb]
\begin{center}
\includegraphics[width=0.75\textwidth]{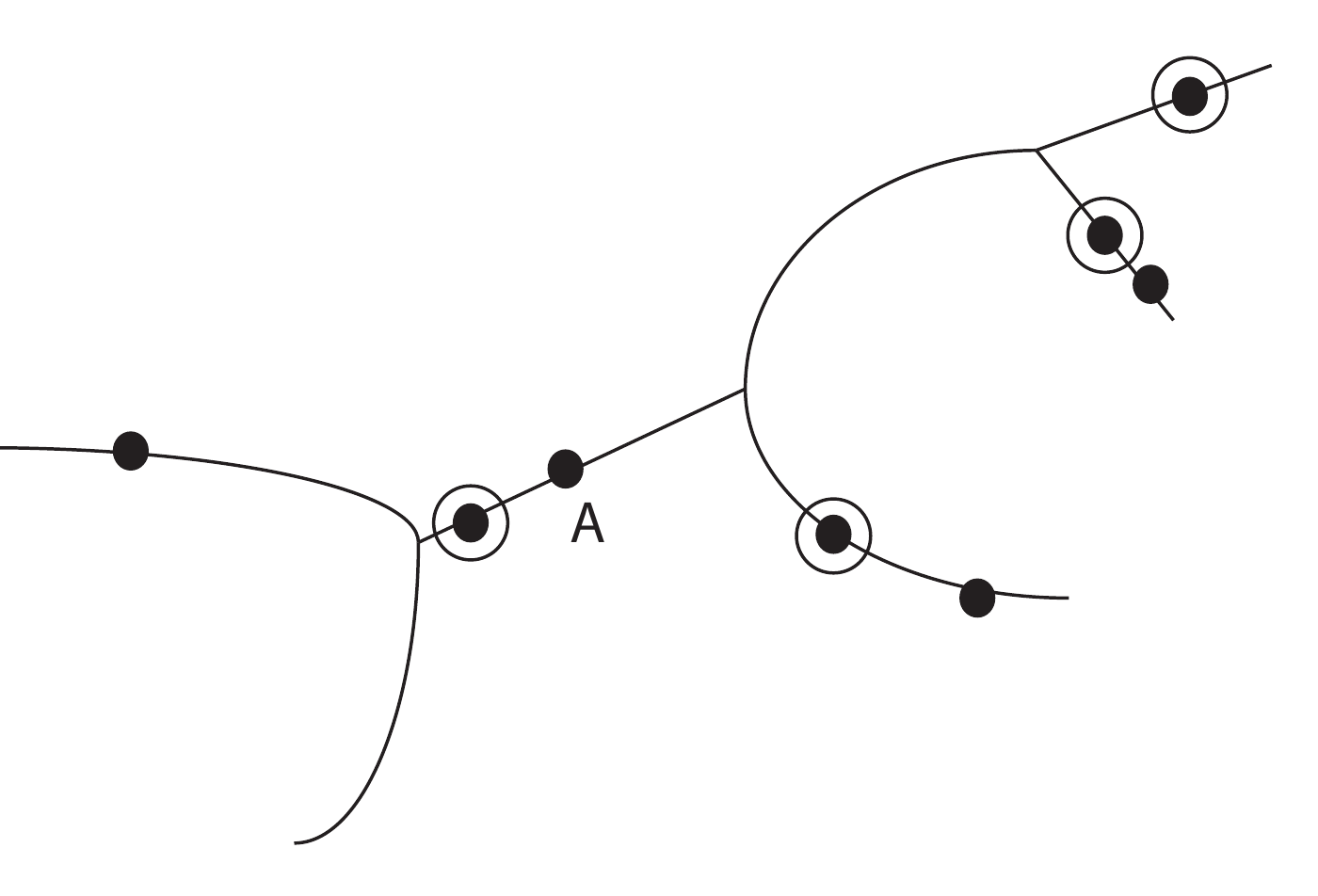}
\caption{Each point represents the center of a maximal disc on $M(G)$.  The neighbors of the disc $A$ are circled.}\label{fig:neighbors}
\end{center}\end{figure*}

\begin{proof}

Assume that the centers of three maximal discs $A$, $B$, and $C$, are on a path of $M(G)$.  Let their centers be denoted by $a$, $b$, and $c$.  The edges of discs $A$, $B$, and $C$ are simply the circles with centers at  $a$, $b$, and $c$ of the same radii as the discs.  Assuming $M(G)$ has no loops, then the path is the only path in $M(G)$ connecting the center of $A$ to $C$.  Suppose that there is an intersection between maximal disc $A$ and $C$ as constructed in Fig.~\ref{fig:abcoverlap}.  Two circles intersect in a region shaped like an asymmetric lens.  Label the two points where the edge of disc $A$ and $C$ intersect $I_{far}$ and $I_{close}$.  Divide the plane into four quadrants defined by the line connecting $a$ and $c$ and the line connecting $I_{far}$ and $I_{close}$.  Now construct disc $B$.  Without loss of generality, we assume the center $b$ is in the bottom right (Quadrant 4) of the figure.  As disc $B$ is constructed, there are restrictions on both where its center can be placed within Quadrant 4 and on its radius.  

First, the point $I_{far}$ must be contained in the disc B.  We observe that as one is traversing the boundary $S$ of the shape, the edge of $A$, $B$, and $C$  must be encountered in the following order: a continuous set of disc $A$ edge points, a continuous set of disc $B$ edge points, a continuous set of disc $C$ edge points, another continuous set of disc $C$ edge points, another continuous set of disc $B$ edge points, and  another continuous set of disc $A$ edge points.   Other continuous sets of other disc edge points may interleave the sets specified, however, the specified order of encountering continuous sets of points of $A$, $B$, and $C$ must still be followed.  If the radius function along the medial axis path is redefined to exactly trace the boundary of $A$, $B$, and $C$, then this ordering must still hold.  Therefore, the intersection points between the edge of disc $A$ and disc $C$ must be contained in disc $B$. 

Second, the radius of disc $B$ cannot be larger than the distance between point $b$ and the farthest point on the edge of disc $A$.  Otherwise all of disc $A$ will be inside disc $B$, making disc $A$ not a maximal disc.  

Third, the points on the edge of disc $A$ not contained in disc $B$ must include points not part of the edge of the asymmetric lens, or else disc $A$ will be completely interior to disc $B$ and disc $C$ and not a maximal disc.

The remainder of the argument reduces to the following.  Given a point $b$ in Quadrant 4, what points on the edge of disc $A$ are farther from point $b$ than $I_{far}$?  If a line is drawn connecting point $b$ to point $a$ then the line intersects the edge of disc $A$ in two locations, one in Quadrant 1 or 2, the other in Quadrant 3 or 4.  The intersected edge point in Quadrant 1 or 2 is the farthest point on the edge of disc $A$ to $b$.  Tracing around the edge of disc $A$ to the other intersection point, each point encountered is closer to point $b$ than the last. Consider the region above a line intersecting $I_{far}$ and point $a$ in Quadrant 4.  If point $b$ was in this region, then only points on the intersection lens edge of disc $A$ are farther from point $b$ than $I_{far}$  This violates the third rule above, so this region cannot contain $b$. If point $b$ is restricted to the remaining region of Quadrant 4, then the distance to $I_{far}$ is alway greater than any other point on the lens.  Therefore, it is the case that the entire intersection overlap region of disc $A$ and disc $C$ is contained in disc $B$.
\end{proof}

The following theorem immediately derives from Theorem \ref{thm:neighbors},

\begin{theorem} Let $R_N$ be a set of maximal discs on $M(G)$.
Let $M(G)$ be divided into two loci of connected points $P_1$ and $P_2$ such that the only points $P_1$ and $P_2$ have in common are a finite set of points occupied by maximal discs $R_{N,boundary}\subset R_N$.  Let $R_{N,1}$ be the maximal discs whose centers are on $P_1$ but do not include $R_{N,boundary}$.  Likewise, Let $R_{N,2}$ be the maximal discs whose centers are on $P_2$ but do not include $R_{N,boundary}$.  The area covered only by the set of discs in $R_{N,1}$ is the same for all possible $R_{N,2}$, and the area covered by only by the set of discs in $R_{N,1}$ is the same for all possible $R_{N,2}$.  \label{thm:isolated}
\end{theorem}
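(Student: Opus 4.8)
The plan is to introduce the unions $U_1=\bigcup_{D\in R_{N,1}}D$, $U_2=\bigcup_{D\in R_{N,2}}D$ and $U_b=\bigcup_{D\in R_{N,boundary}}D$, and to show that the region covered by $R_{N,1}$ but by no other disc of $R_N$ equals $U_1\setminus U_b$ — an expression that does not mention $R_{N,2}$ at all — and symmetrically that the region covered only by $R_{N,2}$ equals $U_2\setminus U_b$. Since $P_1\cup P_2=M(G)$ while $P_1\cap P_2$ is precisely the set of centers of $R_{N,boundary}$, the collection $R_N$ is the disjoint union $R_{N,1}\sqcup R_{N,boundary}\sqcup R_{N,2}$, so the area ``covered only by $R_{N,1}$'' is $U_1\setminus(U_2\cup U_b)$, and it is enough to prove the inclusion $U_1\cap U_2\subseteq U_b$.

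To prove $U_1\cap U_2\subseteq U_b$ I would take an arbitrary point $x\in D_1\cap D_2$ with $D_1\in R_{N,1}$ and $D_2\in R_{N,2}$ and consider the unique path $\pi$ in the tree $M(G)$ joining the center of $D_1$ to the center of $D_2$. Because $D_1\notin R_{N,boundary}$ its center lies in $P_1\setminus P_2$, and likewise the center of $D_2$ lies in $P_2\setminus P_1$; hence by connectedness $\pi$ must meet $P_1\cap P_2$, i.e. it passes through the center of some $D_b\in R_{N,boundary}$. Thus the centers of $D_1$, $D_b$, $D_2$ occur in that order along $\pi$, and all three are maximal discs. I would then apply Theorem \ref{thm:neighbors} — concretely, the geometric statement established inside its proof, that for maximal discs $A$, $B$, $C$ whose centers lie in that order along a path of $M(G)$ one has $A\cap C\subseteq B$ — to the triple $A=D_1$, $B=D_b$, $C=D_2$. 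This yields $D_1\cap D_2\subseteq D_b\subseteq U_b$, so $x\in U_b$, as required.

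Once $U_1\cap U_2\subseteq U_b$ is in hand, the area covered only by $R_{N,1}$ is $U_1\setminus(U_2\cup U_b)=U_1\setminus U_b$, which depends only on $R_{N,1}$ and $R_{N,boundary}$ and is therefore identical for every admissible choice of $R_{N,2}$; exchanging the roles of the indices $1$ and $2$ gives the companion assertion. Degenerate configurations — an empty $R_{N,1}$ or $R_{N,2}$, or $D_1$ or $D_2$ sitting immediately next to $D_b$ along $\pi$ — are covered by the same reasoning and only need to be pointed out.

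I expect the one genuinely delicate step to be the invocation of Theorem \ref{thm:neighbors}: as stated it concerns a disc together with its immediate neighbors, whereas here $D_2$ is in general not a neighbor of $D_1$ and there is an intervening boundary disc. Two remedies are (i) to lift the ``three-in-a-row'' containment $A\cap C\subseteq B$ directly out of the proof of Theorem \ref{thm:neighbors}, which analyzes exactly that configuration of three discs on a common path, or (ii) to march along the finite list of $R_N$-disc-centers lying on $\pi$ between $D_1$ and $D_b$, applying Theorem \ref{thm:neighbors} to each consecutive triple and peeling off one disc at a time until the overlap with $D_2$ is shown to be contained in $D_b$. I would take route (i), being careful to spell out that the relevant neighbor of $D_1$ is the one lying on $\pi$ in the direction of $D_2$.
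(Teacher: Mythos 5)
Your proposal is correct and follows exactly the route the paper intends: the paper offers no written proof beyond the assertion that the theorem ``immediately derives from Theorem~\ref{thm:neighbors}'' (with Figure~\ref{fig:parts} illustrating that any intersection between discs on the two parts is contained in the boundary disc), and your argument --- reducing to $U_1\cap U_2\subseteq U_b$ via the unique path in the tree $M(G)$ and the three-discs-on-a-path containment $A\cap C\subseteq B$ established inside the proof of Theorem~\ref{thm:neighbors} --- simply supplies the details the paper leaves implicit. Your flagged concern about whether to cite the theorem's statement or its internal geometric lemma is well taken, and route (i) is indeed the cleaner fix.
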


This theorem is illustrated in Figure \ref{fig:parts}.

\begin{figure*}[htb]
\begin{center}
\includegraphics[width=0.75\textwidth]{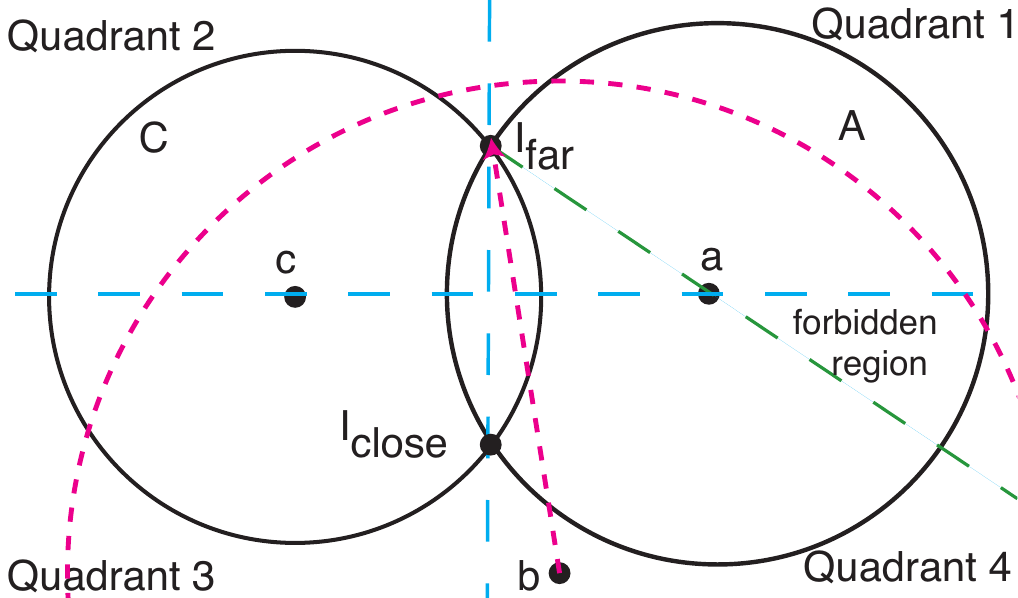}
\caption{A construction to show that disc B must contain all of the overlap between disc A and disc C.} \label{fig:abcoverlap}
\end{center}\end{figure*}

\begin{figure*}[htb]
\begin{center}
\includegraphics[width=0.75\textwidth]{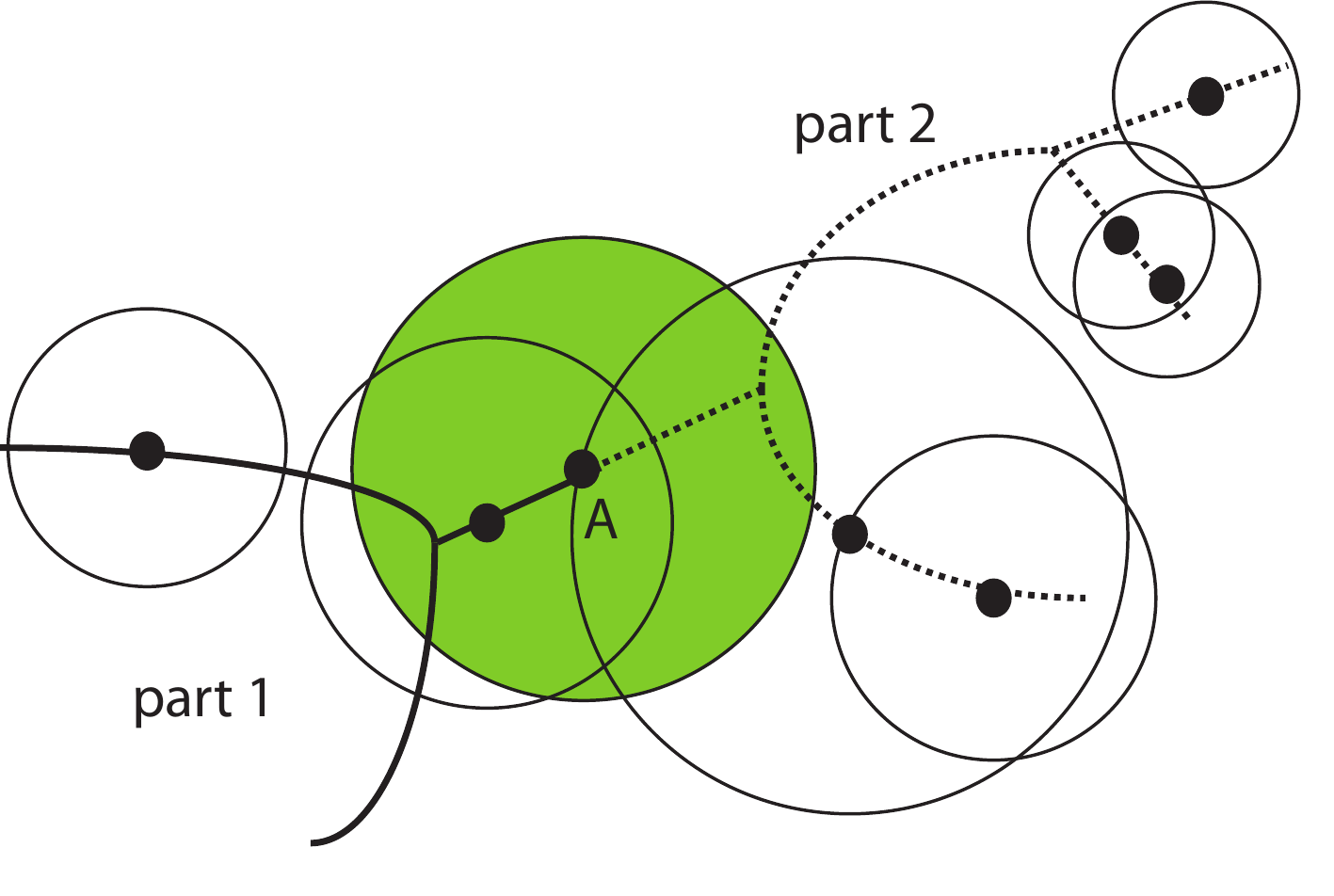}
\caption{If disc $A$ is kept fixed in position, then it divides $M(G)$ from Fig.~\ref{fig:neighbors} into two parts, indicated as solid and dashed lines.  There is no intersection between discs on the two parts of $M(G)$ that is not covered completely by disc $A$ per Theorem \ref{thm:neighbors}.  The two parts, therefore, act as independent spaces. \label{fig:parts}}
\end{center}\end{figure*}

\subsection{Properties of an optimal  planar filling} \label{trapproperties}
We make some observations about the function $\phi(R_N, G)$, where the discs of $R_N$ have centers at points $\bf{x}_i \in M(G)$.  

Per Eqn. \ref{eqn:totalphi}, $\phi$ is a function of the area of discs and the area of overlap between discs.  As  the radius function is continuous over $M(G)$ and the area of overlap between shapes is continuous with respect to inflating or translating the shape, $\phi$ is a continuous function.   

The change in $\phi$ due to moving a single disc center is equal to the change in the area uniquely covered by the disc.  This uniquely covered area can be expressed as the area of the disc minus the overlap between the disc and its neighbors, $O_n(x, r)$, where $x$ is the position of the center of the disc $D$ and $r$ is the radius of the disc.  If the disc center is moved along a path parametrized by $t$, then
 \begin{equation}
 \frac{\partial\phi}{\partial t} = \frac{\partial \mathcal{M}_V\left(V'_1(D)\right)}{\partial t}  = \frac{1}{A_G}\left(  2\pi r\frac{\partial r}{\partial t} - \left(\frac{\partial O_n(x, r)}{\partial x} \frac{\partial x}{\partial t} + \frac{\partial O_n(x, r)}{\partial r}\frac{\partial r}{\partial t} \right)\right)   .
 \end{equation}
 
The function $\frac{\partial\phi}{\partial t}$ is discontinuous when $\frac{\partial r}{\partial t}$,  $\frac{\partial x}{\partial t}$, $\frac{\partial O_n(x, r)}{\partial x}$, or $\frac{\partial O_n(x, r)}{\partial r} $  is discontinuous.   The radius function can be first-order discontinuous at a finite number of points, (e.g. branch points) as can $x(t)$ (e.g. branch points or any point of infinite curvature).   At these points $\phi(t)$ can also be first-order discontinuous.  

If a point of first-order discontinuity is also a local maximum (i.e. $\frac{\partial\phi}{\partial t}$ changes in sign at the point), then small displacements of the neighbors may shift the sided values of the discontinuity without affecting the sign change or shifting the position of the maxima.  The point of first-order discontinuity creates a \emph{center trap}.  The local maximum at a center trap tends to stay stationary unless there are large rearrangements of the points in its neighborhood. As a result, unlike other points on $M(G)$, a center trap tends to be a commonly occupied point in locally maximal filling solutions, optimal filling solutions, and even a fixed feature of filling solutions when $N$ is large.  

A point of infinite curvature in $M(G)$ that is not a branch point, (i.e. its maximal disc is not in contact with the boundary  three or more separate but contiguous sets of points) is an end-point with connectivity two.  

Another point of first-order discontinuity is the point where one disc first contacts another.  These are the points where $\frac{\partial O_n(x, r)}{\partial x}$ or $\frac{\partial O_n(x, r)}{\partial r} $ is discontinuous.  Here the overlap area with another disc changes continuously from zero to positive.  As a sign change cannot occur at this point, it cannot be an isolated local maximum and does not act as a trap.  

We now introduce a new term, \emph{junction point}.
\begin{definition}
A junction point  is a point in $M(G)$ where, relative to some path in $M(G)$ that includes the point, either the radius function or the path itself is first-order discontinuous, or both.  Junctions can act as center traps.
\end{definition}

 Therefore, $\phi(t)$, is a continuous,  piece-wise first-order continuous function. Insights into the structure of $\phi$ and $M(G)$ permit the design of an efficient heuristic with a high likelihood for finding an optimal filling solution.

\subsection{Polygons}
For a convex polygon, $M(G)$ is composed of only line segments.  For a simple polygon, $M(G)$ is composed of line segments and parabolic curves (Figure~\ref{MA}).    For a convex polygon, the parents are always two straight edges.  For a simple polygon, parents can be two straight edges, a straight edge and a reflex point, or two reflex points.   The resultant branches of $M(G)$ and corresponding radius functions are given by the following three cases.

\begin{figure*}[htb]
\begin{center}
\includegraphics[width=0.9\textwidth]{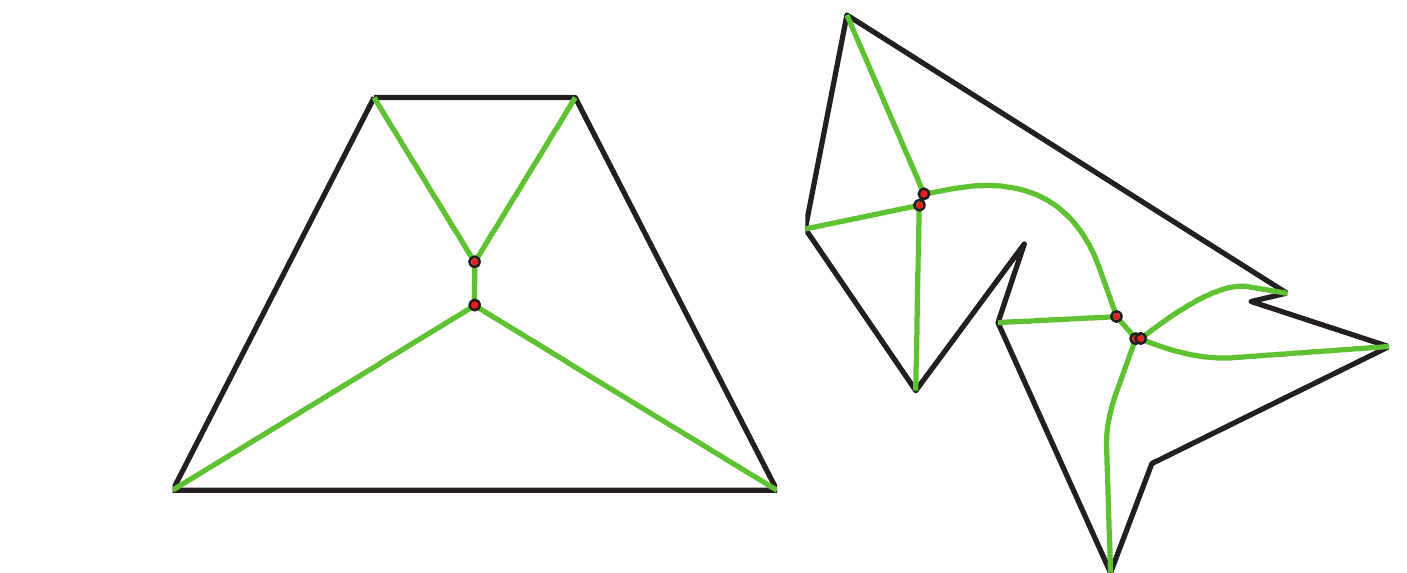}
\caption{Shown are the M(G) of two simple polygons (green online).  The dots (red online) represent junction points.} \label{MA}
\end{center}\end{figure*}

\begin{figure*}
\begin{center}
\includegraphics[width=0.75\textwidth]{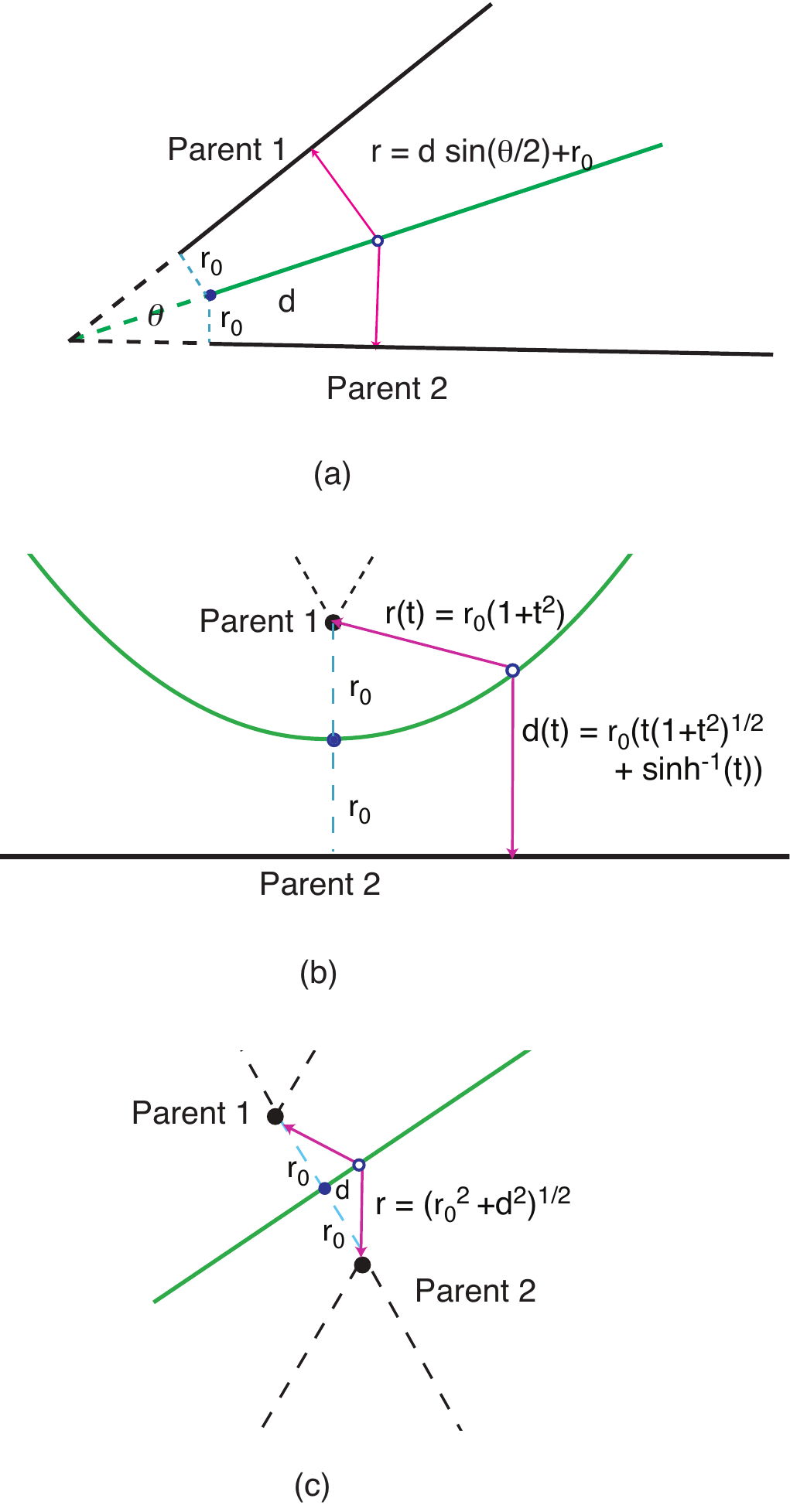}\caption{The three types of branches (green) in a polygon are determined by the parents of the branch.  The parents are (a) two edges (Case 1), (b) an edge and a reflex point (Case 2), and (c) two reflex points (Case 3).  For each branch, the minimum value of the radius function is $a$ and the point is represented with a closed dot.   The open circle represents a disc center at a distance $d$ along the branch.  The radius function in terms of $r_0$, $d$, or a parametrization is provided. } \label{fig:cases}
\end{center}
\end{figure*}

\begin{itemize}
\item{ \emph{Case 1 (Figure (\ref{fig:cases}(a)): two straight edges}}  A line segment with a linear (or constant) radius function.  The path and radius function can be parameterized as $(x(t),y(t)) = \vec{A}t +\vec{B}$,  and $r(t) = ct + r_0$, for $t \geq 0$, for constants $c,r_0 \geq 0$.

\item{ \emph{Case 2  Figure (\ref{fig:cases}(b)): a straight edge and a reflex point}}  A parabolic curve with a non-linear radius function.  The path and the radius function can be parameterized as $(x(t),y(t)) = (2r_0t,r_0t^2)$ and $r(t) = r_0(t^2 + 1)$, where $r_0$ is the minimum of the radius function.  The curvature can be parameterized as $\kappa(t) = \left(1/{2r_0}\right) \left(1+t^2\right)^{-3/2}$.

\item{ \emph{Case 3  (Figure \ref{fig:cases}(c)): two reflex points}} A line segment with a non-linear radius function.  The path can again be parameterized as $(x(t),y(t)) = \vec{A}t +\vec{B}$.   If  $t=0$ is the point on the medial axis halfway between the two reflex points, then $r(t) = \sqrt{a^2  + (a_xt^2-b_x)^2 +(a_yt^2-b_y)^2}$, where $a$ is the distance from the halfway point to the reflex point, $\vec{A}=(a_x, a_y)$ and $\vec{B}=(b_x, b_y)$. 
\end{itemize}

For simple polygons, the point where a line segment meets a parabolic curve (or a parabolic curve joins another parabolic curve) is not a branch point, but does involve a change in the geometry of $M(G)~$\cite{Vilaplana}.    In Figures \ref{fig:cases}(a), \ref{fig:cases}(b), and \ref{fig:cases}(c), the three cases are illustrated with the parents labeled and the radius function shown as a function of the path or segment length, $d$, or of parameter $t$.  Only branch points in simple polygons are junction points.

\subsection{Center-occupied junction points and optimal solutions} \label{section:junctions}

\begin{figure*}[h!]
\begin{center}
\includegraphics[width=1.0\textwidth]{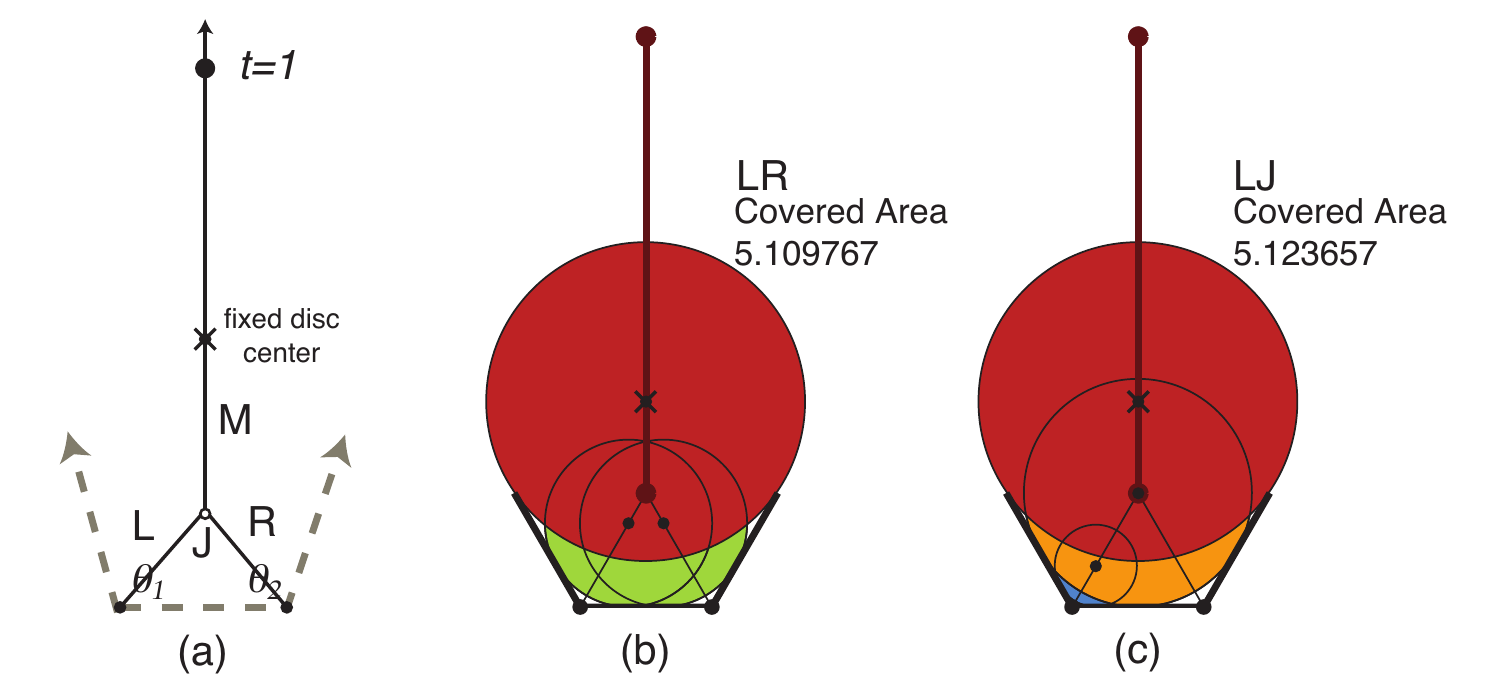}
\caption{In (a) the labeled diagram of the isolated medial axis structure created by three connected polygon edges and a fixed disc is shown.  For $\theta_1 = \theta_2 = 2\pi/3$ and $t = 0.2$, two locally maximal solutions, (b) a symmetrical solution with a disc on the L and R branch and (c) an occupied junction LJ solution are shown.  The second solution is the global maximum. \label{fig:junctionsol}}
\end{center}\end{figure*}

\begin{figure*}[h!]
\begin{center}
\includegraphics[width=1.0\textwidth]{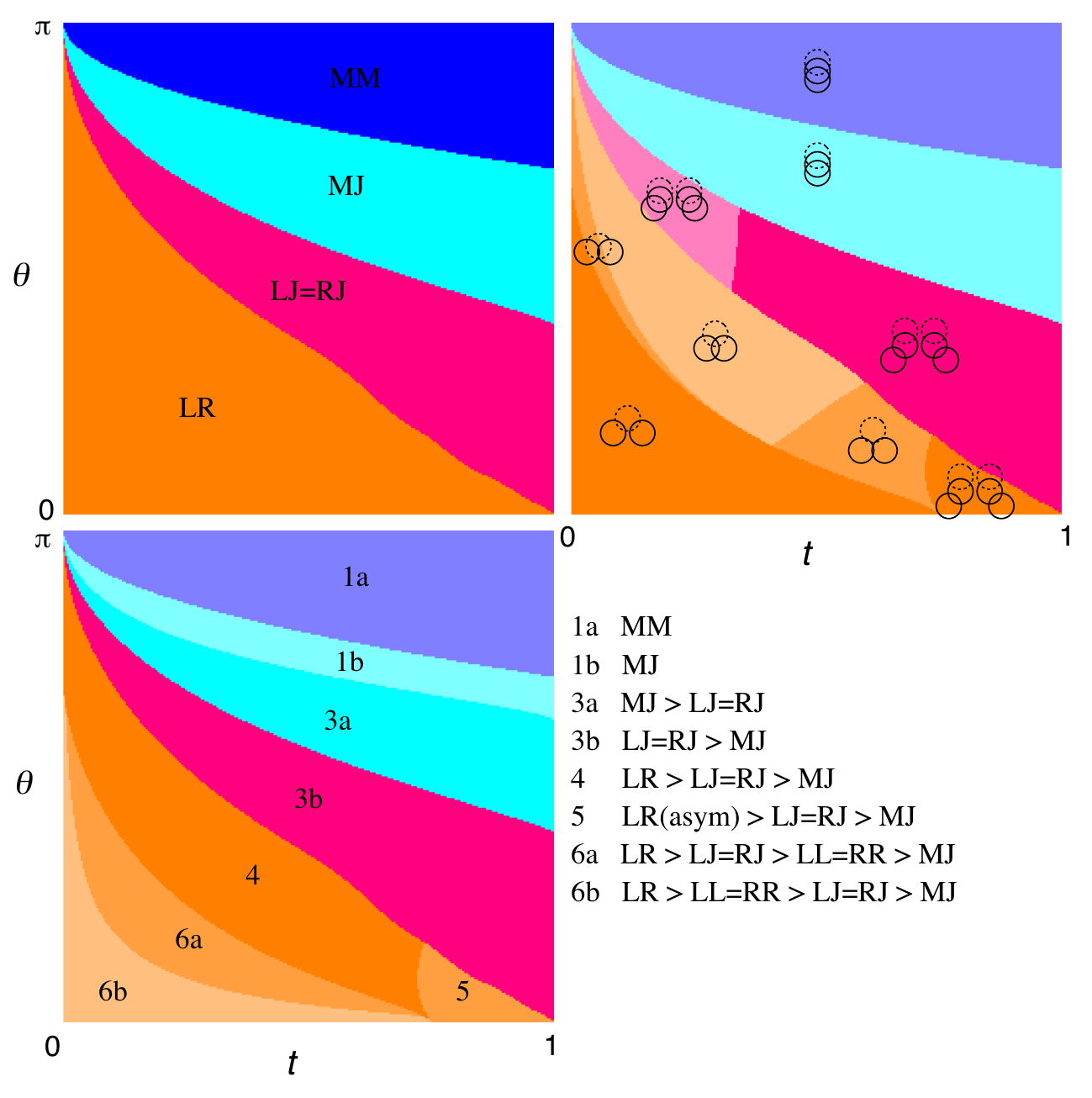}
\caption{Case (A): For the constructed problem of Figure \ref{fig:junctionsol}, $\theta = \theta_1=\theta_2$, on the top left is shown the form of the global maximum as a function of $\theta$ and $t$.  On the right the topological type of the global maximum is shown.  The fixed disc is drawn with a dashed circumference. The two added discs are solid.  On the bottom left, the number and form of the maxima in each part of the phase diagram is indicated.  A key for understanding this diagram is on the bottom right}
\label{fig:sym_nmsol}
\end{center}\end{figure*}

\begin{figure*}[h!]
\begin{center}
\includegraphics[width=1.0\textwidth]{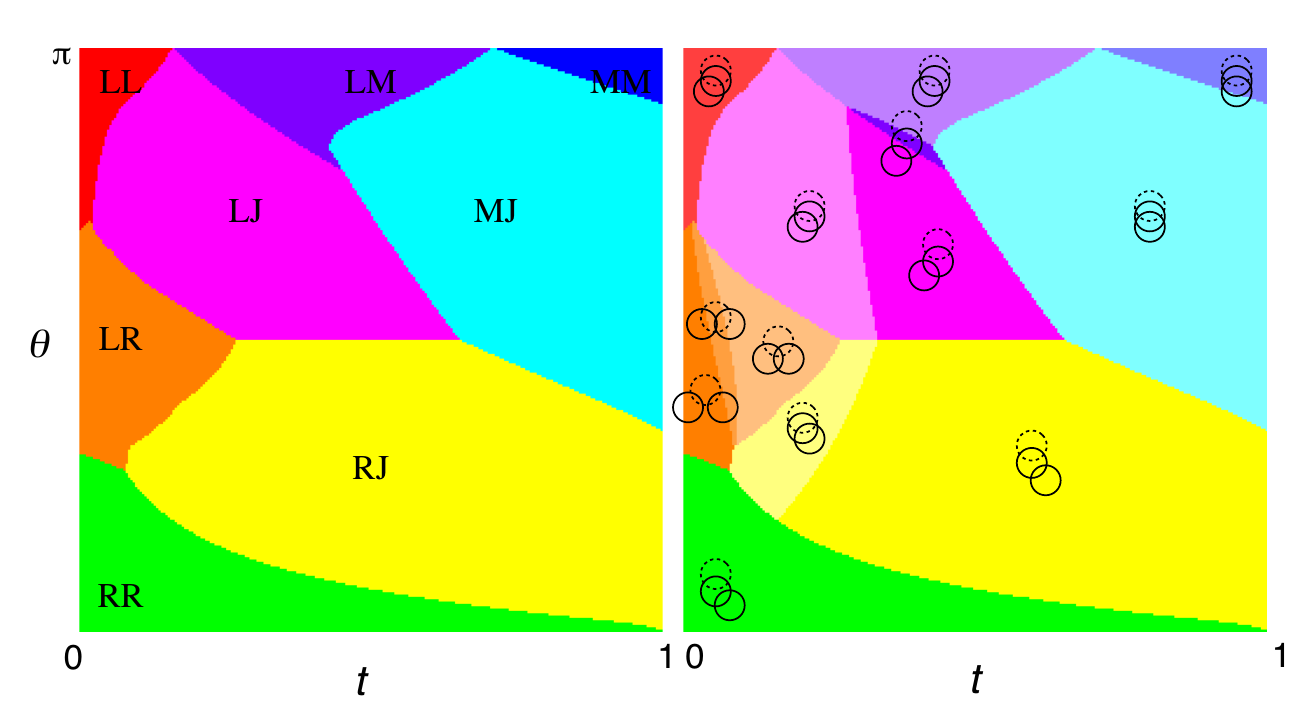}
\caption{Case (B): Diagrams are shown for the constructed problem of Figure \ref{fig:junctionsol}, but with $\theta_1 = \pi/2$,  $\theta_2=\theta$.  On the left is shown the form of the global maximum as a function of $\theta$ and $t$.  On the right the topological type of the global maximum is shown.  The fixed disc is drawn with a dashed circumference.  The two added discs are solid.}
\label{fig:rightangle_nmsol}
\end{center}\end{figure*}
The strategy of the optimization algorithms in the subsequent sections is to find the optimal filling solution, or the global maximum of $\phi$, by generating a population of local maxima.  Junction points are special points that need to be handled separately from branches by a maximum finding search method.    

To demonstrate the diverse landscape of these maxima, and the role of junction points therein, we employ the following numerical experiment.  A section of a polygon $G$ is constructed from a ray, a segment, and another ray having two internal angles, $\theta_1$ and $\theta_2$.   The medial axis of this shape is two straight branches (parents are a ray and the segment) and a straight branch that is a ray (parents are the two rays)  that meet at a junction point, as shown in Figure \ref{fig:junctionsol}(a).     The ray branch is parameterized by defining $t=0$ to be the junction point and defining $t=1$ to be the point on the ray where a maximal disc would be tangent to a disc occupying the junction point.    We fix a disc at the $t$ position on the ray branch for $0\leq t\leq1$ (e.g. the large red discs in Figure \ref{fig:junctionsol}(b) and \ref{fig:junctionsol}(c) with the $\times$ at their centers.).    This region of the polygon below the disc is now completely isolated from the rest of the polygon per Theorem \ref{thm:isolated}.   We now search for all the local maxima that can be constructed by adding two discs below the fixed disc.    We perform this search for two cases, (A) angles $\theta_1$ and $\theta_2$ between each ray and the segment are set identically to $\theta$ for $\theta\in[0,\pi]$, and (B) $\theta_1 = \pi/2$ and $\theta_2 = \theta$ for $\theta\in[0,\pi]$.  In Figure \ref{fig:junctionsol}(b) and \ref{fig:junctionsol}(c), two examples of local maxima are shown for a case (A) construction where $\theta_1=\theta_2= 2\pi/3$ and $t$ = 0.2.  Figure \ref{fig:junctionsol}(a) shows the graph structure defined by the medial axis of this construction, with branches labeled L, R, and M, connected by junction point J.    All local maxima of this construct can be classified by a two letter code representing the branches or the junction point that the disc centers occupy.  We shall refer to the two letter code as the form of the maximum solution. Figure \ref{fig:junctionsol}(b) is a LR maximum (one disc on the L branch, the other on R) and figure \ref{fig:junctionsol}(c) is a LJ maximum (one disc on the L branch, and another on the J junction point.).  In this case the LJ maximum is the global maximum.  A RJ maximum would be symmetrically equivalent, and the global maximum is degenerate.


In the top left of Fig. \ref{fig:sym_nmsol} a diagram showing the forms of the global maximum are shown for case (A) for $\theta$ = 0.0 to $\pi$ radians and $t$ = 0 to 1.  In the region labeled (LJ=RJ), the global maximum is degenerate, as in figure \ref{fig:junctionsol}(c).  We observe that an occupied junction point is part of the global maximum for a large region of the phase diagram (LJ=RJ and MJ).  In the top right of Figure \ref{fig:sym_nmsol} the topological types of the different global maxima regions are shown, each region shaded a different color.  The dotted circles in the diagram represent the fixed disc.  The solid circles represent the other two added circles.  Two global maxima are considered topologically equivalent if they have the same type of edge intersections. The MM and MJ regions, for example, have the same type of topological intersection between the three discs.   The LJ=RJ region has two types (colored with two shades of pink), and the LR region has five (colored with five shades of orange).

In the LR region of the diagram, the form of the global maximum is symmetric (similar to figure \ref{fig:junctionsol}(b)), except at high $t$.  In this region, the disc on the L (R) branch is larger and overlaps the fixed disc, while the disc on the R (L) branch is smaller and overlaps only the other non-fixed disc.  This region, therefore, also has a degenerate global maximum form.  

In the bottom left of Figure \ref{fig:sym_nmsol} the number and forms of local maxima found in each part of the diagram is shown.  The number of local maxima ranges from one to six.  On the bottom right is a key for indicating which forms of local maxima can be found in each region.  For example, in region 6b, there are six local maxima.  The symmetrical LR maximum has a higher filling than a LL maximum, which is symmetrical identical to an RR maximum.    The LL maximum has a higher filling than an LJ maximum which is identical to an RJ maximum.   An MJ maximum has the lowest filling of all the local maxima in this region.   Notably, the MJ local maximum is present everywhere on the phase diagram, except in region 1a, where MM is the single only maximum.  As $\theta$ and $t$ both approach zero, more local maxima emerge.  

On the left of figure \ref{fig:rightangle_nmsol} the form of the global maximum is shown for case (B) where $\theta_1 = \pi/2$ and $\theta_2 = \theta$ = 0.0 to $\pi$ radians and $t$ = 0 to 1.   For case (B), there is no symmetrical solution region (aside from a region of zero area where $\theta_2 = \pi/2$   The space is divided into eight forms of global maxima.   The only global maximum solution form that is not found is the RM solution form.   On the right of Figure \ref{fig:rightangle_nmsol}, the topological type of the global maximum is shown.   The solution regions MM, MJ, RR, and LL each have a single topological type, the solution regions LJ, RM, and RJ have two, and the solution region LJ has three.  Interestingly, the case (B) LR region has two fewer topological forms than the case (A) LR region from figure \ref{fig:sym_nmsol}.   For case (B), a diagram of the number of local maxima as a function of $\theta$ and $t$ is not shown, as we found the landscape too complicated to map.

 The number of maxima and solution forms and topological types depicted in both Figure \ref{fig:sym_nmsol} and \ref{fig:rightangle_nmsol} are surprisingly complicated.  While the boundaries between different solution regions may correspond to some analytical expression, the expression is not known.  If the number of possible forms is small, it is simplest to check each form to find the global maximum.  We observe that the junction point on $M(G)$ is occupied in the global maximum solution for a large fraction of the diagram.  This numerical experiment justifies the treatment of junction points as special points in the medial axis point set.  This numerical experiment also discourages searching for an analytical expression to finding optimal filling solutions for at least small finite sets of discs.

\section{Algorithms for Generating Filling Solutions} \label{sec:algs}
We know of no analytical method for finding the optimal filling solution for a given $N$.  Instead we introduce two algorithms that explore the objective function landscape of $\phi$ to search for the global maximum.
  
\subsection{Genetic Algorithm}  
A genetic algorithm (GA)~\cite{GA} is employed to find the optimal filling solution for polygons.  The benefit of the GA is that it uses a minimal number of mathematical assumptions about the space of the filling solutions, although the computational time required is prohibitively long for $N > 20$.   

\subsubsection{Algorithm Description} GAs start with an initial random population of solutions that are combined and mutated until no better solutions are found after a fixed number of iterations.  

For this implementation, 100$N$ to 400$N$ population members are initialized. Each member is an ordered set of coordinates of N discs. If a disc is randomly generated outside the polygon, it is moved inside.

Without loss of generality, but with dramatic improvement of the efficiency and accuracy, the GA assumes that solutions will consist of maximal discs and attempts to construct them.  First, the radius of each disc is grown to touch the nearest edge in the polygon.  Second, if it can be determined that the disc is in a corner of the polygon  (e.g. the nearby medial axis is a straight branch terminating in an end point), then the disc is moved to the nearest point on the medial axis, constructed by generating the bisector of the corner's internal angle. 
 
These constraints are applied to the entire population and then $\phi$ is computed for each member. The population is then sorted by $\phi$ to produce a list of ranked solutions from best to worst. Members of generation $g$ are randomly chosen as parents for generation $g+1$. The relative probability $p$ of a given member being chosen is weighted by its rank $r$,  $p=1 / \sqrt{r}$.  The next generation of members is created from the current generation as follows.  

\begin{itemize}
\item{\emph{Best}  The best members, unmodified, are included. }
\item{\emph{Mutation}  One ``parent" member is randomly chosen and is randomly mutated by either moving a disc randomly, displacing a disc up to 1/2 the polygon's width, $w$, displacing a disc up to 1/200th of $w$, or moving a disc to a junction point  point.  The mutated ``child'' member is included.  }
\item{\emph{Crossover} Two parent members are selected and their discs are spatially sorted by $x*w+y$, where $(x,y)$ is the position of a disc.   A crossover point $C \in [1,N]$ is randomly determined. The child member contains the discs with indices from 1 to $C$ from the first parent and the discs from $C+1$ to $N$ from the second.  The child member is then included.}
\end{itemize}

The fraction of the next generation created by methods above can be modified to improve the outcome of the algorithm.

Even with the GA's capability of exploring many local maxima to find the global, it can still  get trapped in a local maximum. The GA is run with different random number seeds ten or more times for each shape and value of $N$. The best of the best solutions obtained over all these runs is selected as the GA's final answer.

The mathematical assumptions the GA uses are first, per Theorem \ref{thm:maximal}, that solutions should consist of maximal discs.  Second, the GA also randomly places discs centers on junctions.  As discussed in sections \ref{trapproperties} and \ref{section:junctions}, junction points can act as center traps and are occupied as part of many local maxima.   However, the basin of attraction around the junction point  can be small enough that small displacement mutations do not find it.

\subsection{Heuristic Algorithm for Filling a Polygon}
In this section we introduce a heuristic algorithm that generates a putatively optimal filling solution of $N$ balls, by exploiting the properties of the $M(G)$ structure to generate a collection of unique local maxima.  If enough are generated, the global maximum is among them.
For the two-dimensional filling problem, we propose a local maxima generating strategy whereby centers are distributed onto $M(G)$ and the local filling maxima for that initial guess is found by simple gradient methods (e.g. active set or sequential quadratic programming optimization schemes).  We also propose a method for reducing the number of such distributions needed to find an optimal solution by using the $N-1$ filling solution to generate the $N$ filling solution.

The first step is to intelligently divide up $M(G)$.  The medial axis is divided into $K$ pieces, maximally long branch sections with monotonically increasing radius functions and the junction points connecting them.  To generate these pieces for a polygon, Case (2) and Case (3) may need to be divided into separate sections and joined with other branch sections.  The medial axes of the left and right polygons depicted in Figure \ref{MA} are composed of seven and seventeen pieces, respectively.

\begin{definition}
A \emph{way}, $W$, is a distribution of $N$ discs over the $K$ pieces (branch sections and junctions), $W= \{n_i\}_1^{K}$ where $N=\sum_i^K n_i$  and $n_i \in \mathbb{N}$.  If the $i$-th piece is a junction point  then $n_i \in \{0,1\}.$
\end{definition}

\begin{conjecture}
There is at most one local maximum per \emph{way}. \label{conj:onemax} 
\end{conjecture}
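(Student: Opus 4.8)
\emph{A strategy for establishing that each way admits at most one local maximum.}

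Fix a way $W=\{n_i\}_1^{K}$ and let $\Omega_W$ be the set of filling configurations consistent with $W$: on each non‑junction piece $i$ choose $n_i$ disc centers, written in increasing arc‑length order $0<s_1<\dots<s_{n_i}<\ell_i$ since the discs are indistinguishable, and on each junction piece either place or omit the one permitted disc. Thus $\Omega_W$ is a product of open simplices (junction pieces contributing $0$‑dimensional factors), hence a convex polytope whose closure is compact and contractible. Throughout $\mathrm{int}\,\Omega_W$ no disc crosses a junction and no two discs on a common piece exchange order, so, by the medial‑axis structure of Section~\ref{sec:2D}, the neighbor graph is constant; consequently $A_G\,\phi$ equals a fixed finite sum of disc areas $\pi r_k(t_k)^2$ and signed lens/higher‑overlap areas, each depending (by Theorems~\ref{thm:neighbors} and \ref{thm:isolated}) only on a disc and its fixed neighbors, so $\phi|_{\Omega_W}$ is continuous and piecewise‑$C^\infty$, its first‑order kinks confined to the disc–disc first‑contact loci identified in Section~\ref{trapproperties}. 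The plan is to show (i) every critical point of $\phi|_{\Omega_W}$ in $\mathrm{int}\,\Omega_W$ is a strict, isolated local maximum, and (ii) that this forces uniqueness.

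For (i), I would first treat the single‑disc problem: with all neighbors held fixed, $t_k\mapsto\mathcal{M}_V(V'_1(D_k))$ equals $\pi r_k(t_k)^2$ minus the overlaps with its (at most two, for an interior disc of a branch section, or few) neighbors. Using the explicit radius functions of Cases~1–3, I would show this function is strictly unimodal on its admissible interval by checking that $\partial^2_{t_k}\mathcal{M}_V(V'_1(D_k))<0$ at every stationary point: the quadratic term $\pi r_k^2$ is convex, but at a stationary configuration the overlap with a neighbor is non‑degenerate (positive, with the two discs mutually wrapping), and the negative curvature it contributes dominates. One‑sided second derivatives at the first‑contact kinks are handled by the same inequality. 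I would then pass from this diagonal information to negative‑definiteness of the full Hessian of $A_G\,\phi$ using the tree structure: the Hessian has nonzero off‑diagonal entries only between tree‑neighbors, so a leaf‑to‑root elimination/diagonal‑dominance argument — the same bookkeeping that underlies Theorem~\ref{thm:isolated} — reduces its definiteness to the one‑dimensional negativity just established.

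For (ii), a piecewise‑smooth function on the contractible polytope $\Omega_W$ whose interior critical points are all strict local maxima cannot have two of them: a second would, by a mountain‑pass/connectedness argument along a path in $\Omega_W$ joining the two, produce a saddle‑type critical value, and that value must be attained in $\mathrm{int}\,\Omega_W$. The boundary faces of $\Omega_W$ are of two kinds: two discs coinciding, which yields a configuration belonging to a way with one fewer disc and is absorbed into an induction on $N$; and a disc reaching a junction point, which is the face shared with a neighboring way and therefore carries no \emph{additional} local maximum under the natural way‑by‑way bookkeeping (a junction acting as a center trap is counted with the way that places a disc there, per Section~\ref{section:junctions}). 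With these reductions the min–max point cannot lie on $\partial\Omega_W$, contradicting the absence of interior saddles, so $\phi|_{\Omega_W}$ has at most one local maximum.

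The main obstacle is the single‑disc strict‑concavity‑at‑critical‑points step of (i). Global concavity of $\mathcal{M}_V(V'_1(D_k))$ is false — $\pi r^2$ is convex, while the lens area of two discs is concave near internal tangency and convex near external tangency — so one genuinely must exploit the specific Case~1–3 radius functions together with the non‑degeneracy of the neighbor overlap at a stationary point to pin down the sign of the second derivative; this is where a clean general argument may be hard and is presumably why the statement is only conjectured. A secondary difficulty is controlling the higher‑order inclusion–exclusion corrections (triple and larger overlaps, which by Theorem~\ref{thm:neighbors} are supported inside neighbor overlaps) in the densely packed regime, and a third is making the junction‑face bookkeeping in (ii) precise. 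Should (i) resist, a fallback is to prove the weaker statement that gradient ascent on $\Omega_W$ has a unique attracting fixed point by exhibiting the best‑response map as an order‑preserving self‑map of the lattice $\Omega_W$ that is a strict contraction in a weighted metric propagated along the tree, which would give uniqueness of the attracting critical point without full Hessian control.
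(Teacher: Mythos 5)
First, a point of comparison: the paper does not prove this statement. It is stated and used purely as a working hypothesis for the heuristic algorithm, and the only evidence offered is the numerical study of Section~\ref{section:junctions}, whose census of local maxima (e.g.\ the six maxima in region 6b of Figure~\ref{fig:sym_nmsol} are of six distinct forms) is consistent with, but does not establish, one-maximum-per-way. So your proposal cannot be matched against an authorial proof; it can only be judged on whether it would close an open question. It would not, for two concrete reasons beyond the one you already flag. (1) Your step (i) --- that $\partial^2_{t_k}\mathcal{M}_V(V'_1(D_k))<0$ at every stationary point, and that a tree-structured diagonal-dominance argument promotes this to negative-definiteness of the full Hessian --- is the entire content of the conjecture, and you give no inequality for it; you candidly admit global concavity fails and that the sign at stationary points must be ``pinned down'' case by case. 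Until that computation is done for Cases~1--3 (including the Case~3 non-linear radius function and the interaction of three or more mutual overlaps near a densely populated branch), nothing downstream is available. The off-diagonal step is also not free: diagonal dominance requires bounding the mixed partials of the lens areas against the diagonal ones, and near-tangent configurations make the mixed partials large.

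(2) Your step (ii) is not valid as stated for the function you have. You correctly note that $\phi|_{\Omega_W}$ has first-order kinks along the disc--disc first-contact loci, which lie in the \emph{interior} of $\Omega_W$; the paper's own discussion of center traps in Section~\ref{trapproperties} shows that first-order discontinuities of $\phi$ can create stationary behavior that is not a smooth critical point. A mountain-pass/min--max point on a path joining two strict local maxima of a merely piecewise-$C^\infty$ function need only be a critical point in the Clarke (nonsmooth) sense, and such a point can sit on a kink locus without contradicting ``no interior smooth saddles.'' (The paper notes that the first-contact kinks cannot themselves be isolated maxima, but that does not exclude them from hosting the saddle your argument needs to forbid.) Similarly, your disposal of the junction faces of $\partial\Omega_W$ by ``bookkeeping'' is not an argument: the connecting path may be forced through a face where a disc reaches a junction, and you must show the min--max value is not attained there rather than reassign it to a neighboring way. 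Your fallback (a contractive, order-preserving best-response map on the tree) is an interesting alternative route, but it too rests on an unproved strict-contraction estimate that is essentially equivalent to the Hessian control you could not obtain. In short: the plan is reasonable and correctly identifies where the difficulty lives, but both pillars of the argument are missing, so the statement remains, as in the paper, a conjecture.
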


If Conjecture \ref{conj:onemax} holds, then to find the optimal $N$ filling solution, a maximum must be generated for every way of $N$ discs and $K$ pieces.   If $J$ is the number of pieces that are junctions, then the number of maxima to be searched is of order $O\left(N^{K-J-1}\right)$ (see A-4).

\begin{conjecture} Given the optimal \emph{way} of distributing $N-1$ discs, $\{n'_i\}_1^{K}$, the optimal \emph{way} of distributing $N$ discs is nearby, where nearby means $\sum_1^K \mid n_i - n'_i \mid$ is small, and that if the discs assigned to a given piece is decreased, the pieces that have discs increased have a minimal distance (counted by number of connecting pieces) to the decreased piece. \label{conj:nearby}
\end{conjecture}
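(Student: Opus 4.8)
\emph{Proof strategy.} The plan is to recast the search for the optimal \emph{way} as a discrete resource-allocation problem on the tree underlying $M(G)$ and then to import a sensitivity theorem for concave allocation. First I would use Conjecture~\ref{conj:onemax} to define $\Phi(W)$ to be the value of $\phi$ at the unique local maximum associated with the way $W$ (with $\Phi(W)=-\infty$ if $W$ admits no local maximum). The optimal $N$-way is then a maximizer of $\Phi$ over the admissible lattice simplex $\{W:\sum_i n_i=N\}$, and the goal is to compare the maximizers for budgets $N-1$ and $N$ (for some choice of each, if they are not unique).

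Second, I would make the ``locality'' of $\Phi$ precise. Let $T$ be the tree whose nodes are the $K$ pieces and whose edges join branch-pieces that share a junction. By Theorem~\ref{thm:neighbors} every disc overlaps only its neighbors on $M(G)$, and by Theorem~\ref{thm:isolated} an occupied junction completely decouples the two sides of $M(G)$; together these suggest that, up to overlap corrections at the junctions, $\Phi$ splits as
\[
\Phi(W)=\sum_{v}g_v(n_v)+\sum_{(u,v)\in E(T)}h_{uv}(n_u,n_v),
\]
where $g_v(n_v)$ is the filling produced by optimally placing $n_v$ maximal discs on piece $v$ in isolation and $h_{uv}$ is the non-positive correction for the overlap between the boundary discs of adjacent pieces. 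Such a tree-structured objective can be maximized over any fixed budget by a dynamic program sweeping $T$, whose value function inherits regularity from the $g_v$ and $h_{uv}$.

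Third --- the analytic heart --- I would prove diminishing returns piecewise: for each of Cases~1--3 the sequence $g_v(n)$ is concave, i.e.\ $g_v(n+1)-g_v(n)$ is non-increasing in $n$. For Case~1 this is a one-dimensional covering estimate using the linear radius function; for Cases~2 and 3 one uses the explicit parametrizations $r(t)=r_0(t^2+1)$ and $r(t)=\sqrt{a^2+(a_xt^2-b_x)^2+(a_yt^2-b_y)^2}$ recorded above, the newly covered area of an added disc decaying with $n$ in the same way that underlies the $N\to\infty$ distributions of Section~\ref{sec:contlim}. If the cross terms $h_{uv}$ are mild enough that the dynamic-program value function remains discretely concave along $T$, then the standard sensitivity result for maximizing a discretely concave function over a base simplex applies: the optimal $N$-way is obtained from the optimal $(N-1)$-way by choosing one node to receive the extra disc and propagating a chain of unit shifts along the $T$-path to it, which is exactly the conclusion of Conjecture~\ref{conj:nearby}, with $\sum_i|n_i-n'_i|$ bounded by the length of that path plus one and every ``increased'' piece lying on the path from the ``decreased'' one. (In the purely separable case no piece decreases and $\sum_i|n_i-n'_i|=1$; the decreases the conjecture anticipates must come from the cross-junction terms, e.g.\ a junction flipping between occupied and empty, the point being that such a flip only reshuffles discs among $T$-neighbors.)

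The main obstacle is that neither the per-piece concavity nor, more seriously, the global discrete concavity of $\Phi$ is guaranteed. The numerical experiment of Section~\ref{section:junctions} shows that a three-disc construction can already support up to six local maxima with topologically distinct global-optimum regions, which is incompatible with a globally concave $\Phi$; the culprits are the cross-junction term $h_{uv}$ and the ``center trap'' effect of junctions. I therefore expect a proof in full generality to be out of reach and would settle for two partial results: (i) a rigorous version under the hypothesis that no junction is center-occupied in either optimum, where Theorem~\ref{thm:isolated} makes $\Phi$ exactly separable and the classical concave-allocation sensitivity theorem gives $\sum_i|n_i-n'_i|=1$ directly; and (ii) an asymptotic quantitative version valid for all $G$: since the marginal value of the $k$-th disc on any piece tends to $0$ as $k\to\infty$, replacing the optimal $N$-way by the best way within $\ell^1$-distance $R$ of the optimal $(N-1)$-way costs at most the sum of the $R$ smallest marginal disc values, which vanishes as $N\to\infty$. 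Result (ii) is what actually justifies seeding the $N$-disc search with the $(N-1)$-disc solution in the heuristic algorithm.
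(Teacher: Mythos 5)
The statement you are trying to prove is, in the paper, exactly what it is labeled: a conjecture. The authors give no proof of it anywhere; they only offer indirect evidence (the heuristic built on it reproduces the genetic-algorithm solutions in Table~1, and the ``Self-Correcting'' subsection argues that even when the conjecture-based search misses an optimum it tends to recover at larger $N$). So there is no paper proof to match your approach against, and your own write-up correctly stops short of claiming a proof. As a research programme your reduction to a tree-structured allocation problem with a discrete-concavity sensitivity theorem is a sensible way to try to turn the conjecture into a theorem, and your observation that the multiplicity of local maxima in Section~\ref{section:junctions} already rules out naive global concavity is the right caution.

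There is, however, one concrete error in your partial result (i). You claim separability of $\Phi$ ``under the hypothesis that no junction is center-occupied in either optimum, where Theorem~\ref{thm:isolated} makes $\Phi$ exactly separable.'' Theorem~\ref{thm:isolated} gives the opposite: the two loci $P_1$ and $P_2$ decouple precisely when their common points \emph{are} occupied by discs of $R_{N,\mathrm{boundary}}$. If a junction is unoccupied, the discs nearest it on adjacent pieces are neighbors in the sense of Theorem~\ref{thm:neighbors}, their overlap is not contained in any third disc, and the cross term $h_{uv}$ is genuinely present --- so $\Phi$ does not separate. The hypothesis you want is that all junctions are occupied in both optima (essentially Conjecture~\ref{conj:junctions}), which is exactly the setting of the paper's ``Self-Correcting'' argument; note also that the per-piece diminishing-returns property you need (that $g_v(n+1)-g_v(n)$ is non-increasing) is asserted there without proof, so your step three would still have to be carried out for each of Cases 1--3 before even the occupied-junction version of the conjecture is rigorous.
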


\begin{conjecture}
For a given $G$ and $M(G)$, there is an $N'$ such that for $N \geq N'$, the junction points are always occupied in the optimal filling solutions.\label{conj:junctions} 
\end{conjecture}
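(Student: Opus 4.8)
\emph{Proof sketch (proposal).} The plan is to argue by contradiction and localize: assume there is an infinite sequence $N\to\infty$ along which some optimal filling $R_N$ leaves a fixed junction point $J$ unoccupied. In a simple polygon the junction points are exactly the branch points, so $J$ carries a maximal disc $D_J$ of radius $\rho=r(J)$ tangent to $S$ along $k\ge 3$ contiguous arcs. The single structural fact I will use is that any maximal disc whose center lies on $M(G)$ at distance $\eta>0$ from $J$ is tangent to $S$ at only two of these $k$ features and recedes from the remaining ones by a margin $\Theta(\eta)$, so it fails to cover a neighborhood of $D_J$'s third tangency of area $\Theta(\eta^{3/2})$. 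Fix a neighborhood $U$ of $J$ in $M(G)$ small enough that $U$ meets only the $k$ branch germs at $J$ and $\rho/2<r<\rho$ on $U$. Freezing the discs of $R_N$ whose centers lie outside $U$, Theorem~\ref{thm:isolated} shows that the area attributable to the centers inside $U$ decouples from the rest, so improving $R_N$ reduces to improving the placement of those centers within $U$. The number $n_U$ of such centers diverges with $N$ by the density statement of Theorem~\ref{thm:onlymaximal} applied to the (smooth) boundary arcs tangent to them, so the whole claim reduces to: for $n_U$ large, the optimal placement of $n_U$ maximal discs on $U\cap M(G)$ with the given frozen boundary must occupy $J$.

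For that, compare the best local configuration that occupies $J$, with uncovered-area-near-$J$ (local deficit) $\delta^{\mathrm{occ}}(n_U)$, against the best one that does not, $\delta^{\mathrm{emp}}(n_U)$; one must show $\delta^{\mathrm{emp}}(n_U)>\delta^{\mathrm{occ}}(n_U)$ for $n_U$ large. In the occupied case $D_J$ covers a full neighborhood of $J$ and the only losses near $J$ are the ordinary slivers between $D_J$ and its $k$ neighbors, which an asymptotic analysis (in the spirit of the $N\to\infty$ computations of Section~\ref{sec:contlim}) should put at $\delta^{\mathrm{occ}}(n_U)=c_{\mathrm{occ}}(J)/n_U^2+o(1/n_U^2)$. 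In the empty case, let $\eta>0$ be the distance from $J$ to the nearest occupied center; by the structural fact above, its disc leaves uncovered, near the third tangency, a region of area $\Theta(\eta^{3/2})$ that, by Theorem~\ref{thm:neighbors}, can only be reclaimed by the two discs on the branch germs flanking that tangency. In an \emph{optimal} configuration $\eta=\Theta(1/n_U)$ (otherwise a disc is wasted arbitrarily close to $J$), so $\eta^{3/2}=\Theta(n_U^{-3/2})$ is far larger than $n_U^{-2}$: the region cannot simply be abandoned, and reclaiming it forces the flanking discs off their occupied-case positions, which the Case~1--3 radius-function formulas should turn into an extra deficit, giving $\delta^{\mathrm{emp}}(n_U)=(c_{\mathrm{occ}}(J)+c_{\mathrm{extra}}(J))/n_U^2+o(1/n_U^2)$. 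The conjecture follows once $c_{\mathrm{extra}}(J)>0$, i.e.\ once one shows that tiling the tip of the $J$-wedge with discs centered strictly off $J$ is asymptotically strictly more wasteful than using the single disc $D_J$ at the tip.

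The main obstacle is exactly this inequality $c_{\mathrm{extra}}(J)>0$. Because the optimal empty configuration pushes its nearest center to distance $\eta\to 0$, the empty and occupied configurations become asymptotically indistinguishable and both deficits are $\Theta(1/n_U^2)$, so one is comparing the leading \emph{constants} of the two local deficits, which requires carrying the local continuum expansion one order beyond what the global $N\to\infty$ rate needs, and showing no degenerate cancellation. These constants depend on the wedge angles at $J$ --- the $\theta$-parameters whose effect on the small-$N$ landscape is mapped in Figures~\ref{fig:sym_nmsol}--\ref{fig:rightangle_nmsol} --- so $c_{\mathrm{extra}}(J)>0$ must be checked uniformly over that geometry, ruling out a degenerate family of angles; the threshold $N'$ is then the maximum of the finitely many per-junction thresholds. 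Finally, Theorem~\ref{thm:onlymaximal}, used to get $n_U\to\infty$, assumes no concave points of infinite curvature, so a junction whose maximal disc is tangent at a reflex vertex needs separate treatment and may genuinely fail the conjecture --- consistent with its being posed only as a conjecture.
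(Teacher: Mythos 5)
You should first be aware that the paper contains no proof of this statement: it is posed as Conjecture~3, used only as a hypothesis in the self-correction discussion, and reappears verbatim among the open questions in the Conclusion. So there is no ``paper's proof'' for your sketch to match, and the relevant question is whether your sketch closes the gap. It does not, and you say so yourself: the decisive inequality $c_{\mathrm{extra}}(J)>0$ --- that covering the tip of the $J$-wedge with off-junction discs is strictly more wasteful \emph{at the level of the $1/n_U^2$ coefficient} than placing $D_J$ at $J$ --- is essentially the conjecture restated in local, asymptotic form, and nothing in the sketch establishes it. Worse, the way you set up the comparison makes the needed inequality structurally fragile: your dichotomy ``either $\eta=\Theta(1/n_U)$ or a disc is wasted'' does not exclude $\eta=o(1/n_U)$, and in that regime the empty configuration is an $o(1/n_U)$ perturbation of an occupied one, so $\delta^{\mathrm{emp}}$ and $\delta^{\mathrm{occ}}$ agree to leading order $1/n_U^2$ and the whole argument collapses onto the degenerate cancellation you flag at the end. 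Ruling that out requires quantifying how fast $\phi$ drops as the center leaves the trap at $J$ (a first-order one-sided derivative estimate at the junction, in the sense of Section~3.1), which is a concrete missing lemma, not a technicality.

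Two further soft spots. Theorem~\ref{thm:isolated} decouples sub-problems only across points \emph{occupied by disc centers}; freezing the discs outside a neighborhood $U$ of $J$ does not by itself isolate the interior of $U$ unless the boundary points of $U$ on $M(G)$ are themselves occupied centers, so you must choose $U$ to terminate at occupied centers (possible since centers become dense, but it has to be said, and it makes $U$ depend on $N$). And your key structural fact --- a center at distance $\eta$ from $J$ abandons a sliver of area $\Theta(\eta^{3/2})$ near the third tangency --- relies on that tangency being at a smooth boundary point; for a junction whose maximal disc touches a reflex vertex the estimate fails outright (this is exactly the mechanism behind Figure~\ref{oddcases}(a)), so the conjecture as stated, with no restriction on $G$, may need the same ``no concave points of infinite curvature'' hypothesis as Theorem~\ref{thm:onlymaximal} before any version of your argument can apply. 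In short: the localization-plus-continuum-expansion strategy is consistent with the paper's machinery and is a reasonable plan of attack, but as written it is a program, not a proof, and the statement remains open.
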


Given the \emph{way} of the $N-1$ filling solution, the heuristic generates the local maxima of the nearby \emph{ways} using a local maximum finding technique.  The best local maximum found is presumed to be the optimal $N$ filling solution for the shape.  This heuristic is made more efficient by taking advantage of center occupied junction points and the dependence of the filling function on the nearest neighbors.  We implement this heuristic for polygons, which have a limited set of medial axis parameterized pieces to be considered. 

 \subsubsection{Detailed Description of Heuristic}

 Following is a more detailed description of the Heuristic Algorithm (HA).  
\paragraph{Auxillary Algorithms}
The following sub-algorithms are needed to deploy the HA.
\begin{enumerate}
\item{\emph{Generating and Dividing $M(G)$ into $K$ pieces.}}
The medial axis of the polygon is generated\footnote{using the matlab software package MatlabMedialAxis-Version 2.0 provided by Suresh Krishnan~\cite{Suresh}}.   Parabolic curves (Case 2) and straight curves (Case 3) that  include a minimum in the radius function are split at the minimum.  The split branches are then recombined to form maximally long paths with monotonically increasing radius functions.  Branch points are separated from branch pieces as junction point  pieces. 

\item{\emph{Calculating the Area of a Union of Discs.}}  The total area of the union of the discs is determined analytically  by dividing the space into intersection regions defined by boundary arcs and calculating the area of each region~\cite{Vakulenko}.   The method can be applied to the entire set of discs, or, far more efficiently, by dividing the calculation over the discs on each piece.  In this latter method, first the area of the union of discs on each piece is calculated and summed.  This sum over-counts the overlaps between unions of discs of different pieces.  Second, the overlap between the disc at the end of a piece and its neighboring discs on other pieces is subtracted from the total, once for each time it was over-counted.  This latter method is more complicated, but also more computationally efficient because the areas of smaller sets of discs are calculated.

\item{\emph{Partitioning a Graph by Occupied Junctions.}}  By taking advantage of regions of the $M(G)$ graph isolated by occupied junctions, per Theorem \ref{thm:isolated}, filling solutions can be divided into solutions of independent sub-spaces of $M(G)$.  Using the topology of the $M(G)$ graph and a set of maximal discs $R_N$, this algorithm step divides the graph into \emph{parts}, or sets of pieces isolated from each other by occupied junctions.

\item{\emph{Finding the Local Maxima.}}  A solution set of discs can be uniquely represented by the way $W= \{n_i\}_1^{K}$ and a set of parameters $\{t_{i,j}\}$ where $i \in [1,n_i]$ and $j\in [1,K]$ and $t_{i,j} \in [0,1]$\footnote{if a piece terminates in a junction point  at $t$=0 or 1 or both, then  $t_{i,j} \in (0,1]$,   $t_{i,j} \in [0,1)$, or  $t_{i,j} \in (0,1)$, respectively.}  Given an initial guess way and a parameter set, an optimization method (e.g. active set or sequential quadratic programming optimization schemes) is applied by using $\phi$ as the objective function.  If the initial guess includes a trial disc insertion into a piece of a given part of $M(G)$, then all $t_{i,j}$ parameters of the part are free parameters.  All $t_{i,j}$ parameters outside the part are fixed.

\end{enumerate}

\paragraph{Heuristic Algorithm}

Given the $N-1$ solution:
\begin{enumerate}
\item{For each part of the $M(G)$ graph isolated by occupied junctions, a new disc is inserted into each piece and the best solution for the part is found. }
\item{For each occupied junction point  of the $N-1$ solution, the disc center is removed from the junction point  and initial guesses are generated by inserting two discs into nearby pieces and finding the local maximum.  The more combinations of nearby pieces are tried, the larger a neighborhood is considered.}
\item{The $N$ filling solution is constructed from the best trial solution found.  If a piece $k$ has a parameter value $t = 0$ or 1, indicating that the junction at the end of the piece has been occupied,  then the disc is moved from piece $k$ to the junction point  piece.  If a solution was generated for a part of $M(G)$ and not included in the best solution, and the part is found in both the $N$ solution and the $N-1$ solution, then the solution is cached.}

\end{enumerate}

\subsubsection{Algorithmic Efficiency}

\begin{figure*}[htb]
\begin{center}
\includegraphics[width=1.0\textwidth]{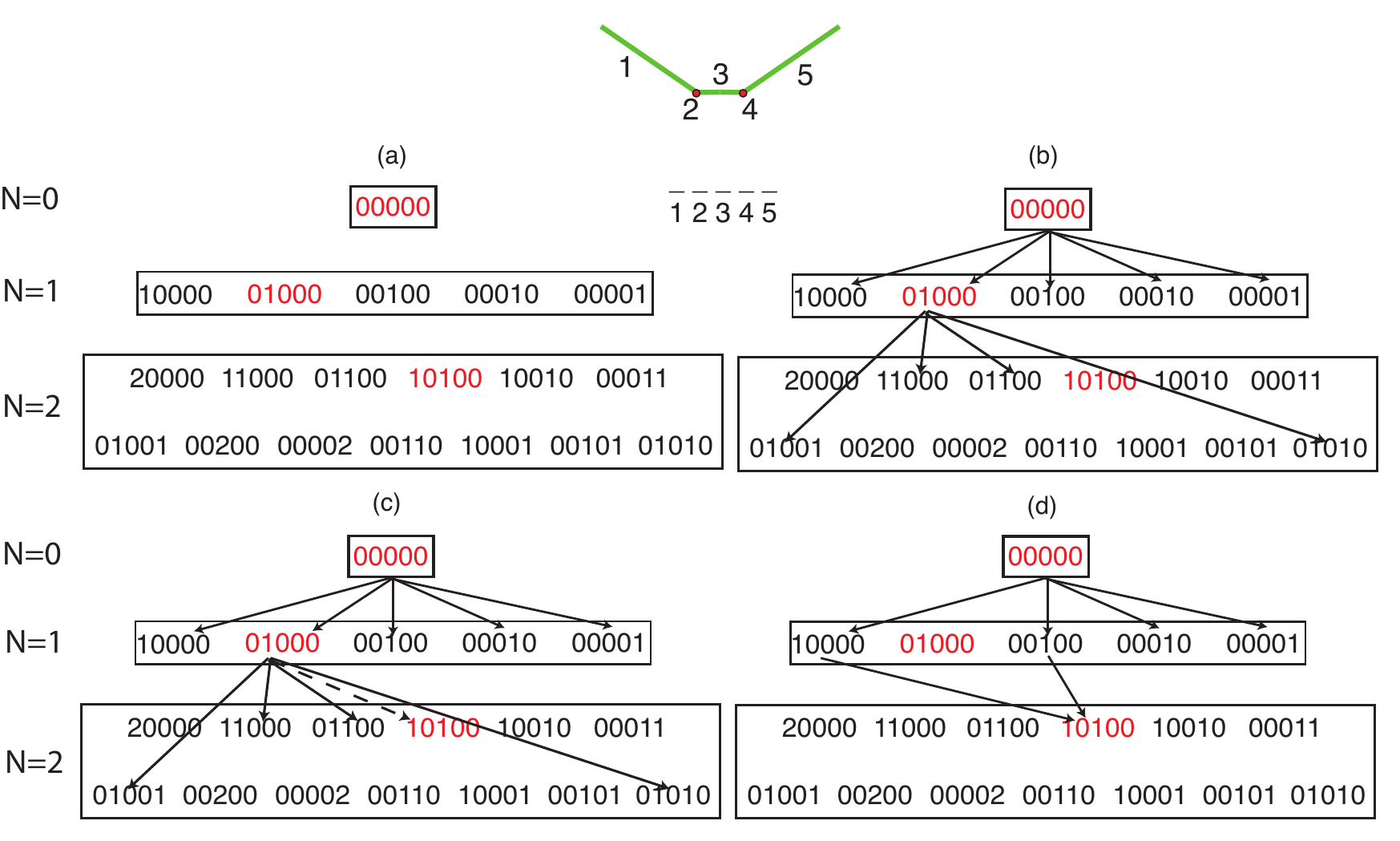}
\caption{At the top of the figure is a medial axis with five pieces, three branches and two junctions.  (a) The full table of \emph{ways} is shown for N=0, 1, and 2.  In (b) the search space is reduced using the greedy assumption that the next best solution is related to the last best solution.  (c)  We also add searches that deoccupy junction points and inserts discs onto nearby branches.  (d)  If the best 1-way was not searched, two of the four remaining 1-ways would have searched the best 2-way on the next iteration.  \label{fig:efficiency}}
\end{center}\end{figure*}

\begin{figure*}[htb]
\begin{center}
\includegraphics[width=0.6\textwidth]{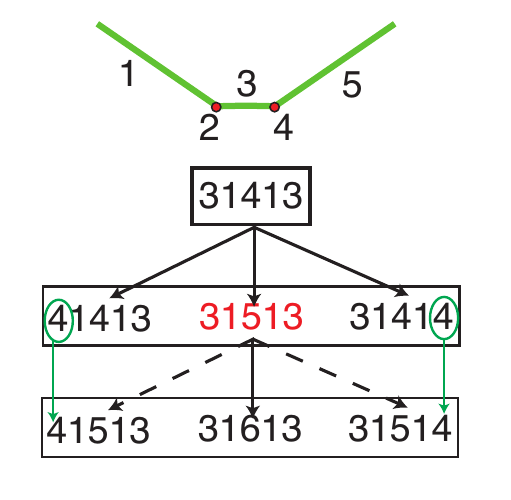}
\caption{ Assume that the junction points 2 and 4 stay occupied.  To generate the $N$=13 solution, three ways are searched for a local maximum.  To generate $N$=14, only one additional way, 31613, needs to be searched.  The ways 41513 and 31514, can be created by combining the search of branch 1 and 5 with the solution of branch $3$ for $N$=13.  The occupied junction points isolate the solutions on each branch from solutions on the rest of the medial axis. \label{fig:caching}}
\end{center}\end{figure*}

\paragraph{Using Greediness to Reduce the Search Space} \label{greed}  The heuristic exploits Conjecture \ref{conj:nearby} to reduce the number of ways to search for local maxima (i.e. number of initial guesses)  from $O\left(N^{K-J-1}\right)$ to $O(N(K+J))$.   Figure \ref{fig:efficiency} depicts a hypothetical medial axis with two junction points and three branch pieces.  Figure \ref{fig:efficiency}(a) shows a table of all possible ways for $N$=0, 1, and 2.  Rather than search each way of $N=2$, a reduced set is searched.  That set is generated as follows, given the best $N-1$ way, one disc is added to each piece (that is not an occupied junction) as shown in Figure \ref{fig:efficiency}(b).  Then, for each occupied junction point  in the $N-1$ best way, the junction point  is deoccupied and disc is inserted into two of the branches nearby the junction, as shown in Figure \ref{fig:efficiency}(c).   The maximum number of ways that will be searched, given the best found $N-1$ way, is $K + AJ$, where $A$ is a constant dependent on how large a neighborhood of a junction point  one chooses.   While this heuristic is not guaranteed to find optimal solutions, it finds a putatively optimal $N$ filling solution with only $O(N(K+J))$ number of searches.

For example, for a triangle with $K$ = 4, finding the best arrangement of $N=10$ discs means searching 121 ways, and the best arrangement of $N=100$ discs requires searching 10,201 ways.  By using a heuristic that exploits Conjecture \ref{conj:nearby}, to  finding the best arrangement of $N=10$ discs requires searching only 70 ways, and for $N=100$ discs, only 700 ways.

\paragraph{Applying optimization techniques to $N'$ discs where $N' < N$} \label{fewdiscs}

The computational effort to calculate the analytically exact area of the union of a set of $N$ discs is super-linear in $N$, as also can be optimization methods of $N$ parameters.  The exact order of the computational effort is dependent on the arrangements of the discs and the details and convergence rate of the optimization algorithm.  The greedy heuristic of section \ref{greed} not only requires searching fewer ways, but also mostly searches ways of $N'$ discs where $N' < N$.  This significantly improves the computational efficiency of finding a solution.

\paragraph{Efficiently Sub-Dividing the Search Space} \label{divide}
The heuristic also improves efficiency by exploiting the properties of the solution space per Theorems \ref{thm:neighbors} and \ref{thm:isolated} and the behavior of center traps as discussed in section \ref{trapproperties}.

When a junction point  is occupied by a disc center in the $N-1$ solution, the center is usually trapped and the phase space of centers can be divided into independent sub-spaces.  If it is known (or guessed) that the best solution for $N$ also includes a center at the junction, then the sub-parts of $M(G)$ connected only by the junction point can be searched independently.  Per Theorem \ref{thm:isolated}, rearrangements of centers in one sub-part cannot affect the best arrangement of centers in another if they are connected only by a center-occupied junction.  

This means that searches can be performed on a subset of the $N$ discs (efficient per section \ref{fewdiscs}) and that solutions of independent sub-spaces of $M(G)$ can be cached.  Figure \ref{fig:caching} shows how, when junction points are presumed to remain occupied, only one additional search of a way is needed to generate the next putatively optimal $N$ filling solution.  Per Conjecture \ref{conj:junctions}, at sufficiently large $N$, junction points are occupied.  This implies that for large $N$, generating the optimal $N$ filling solution from the optimal $N-1$ filling solution requires a search of only one additional way, reducing the complexity of the HA to $O(N)$ searches. 

\subsubsection{Self-Correcting}
When implementing the HA, a practical choice is made as to how large a neighborhood of ways $N$ discs that are nearby the optimal $N-1$ way will be searched.  There is a computational trade-off between searching only ways such that $\sum_1^K \mid n_i - n'_i \mid=1$, in which case the optimal way may be missed, or such that  $\sum_1^K \mid n_i - n'_i \mid < \infty$, in which case the optimal way can not be missed but the search space has not been reduced.  

One weakness of a method that uses the $N-1$ solution to find the $N$ solution is that, if the optimal $N'$ solution is not found, all solutions for $N > N'$ may not be optimal as well.  However, we observe that in most cases where the HA does not find the optimal $N'$ filling solution, by some $N>N'$, the HA is generating the optimal solution again.   That is, even if the wrong solution is found, we observe that the solution finding method tends to self-correct at a higher $N$.  In Figure \ref{fig:efficiency}(d), for example, if the optimal $N$=1 way was omitted from the search, the optimal $N$=2 way would still be searched by two of four alternate $N$=1 ways.   

Presuming Conjecture \ref{conj:junctions} is true, we can show that for at least one common case, an HA searching only a small neighborhood of ways still always self-corrects.  Consider, an $M(G)$ with $K$ pieces and $J$ junctions, where all pieces that are sections of branches are connected by pieces that are junctions (e.g. G is a convex polygon).  Assume that we restrict our search to ways such that all the $J$ junctions are occupied for $N \geq J$.  The $N-J$ remaining discs will be partitioned over the remaining $K-J$ pieces.  For this case, per Theorem \ref{thm:isolated}, the increase in the filling measure due to adding an arbitrary number of discs to a piece can be solved independently.   Also, adding discs sequentially and optimally to a given piece strictly increases $\phi$, but the change in $\phi$ monotonically decreases.  It follows that the best $N$ way found with the given constraints can never include removing a disc from a piece.  Thus the heuristic will search for the best $N$ way by comparing the local filling maxima generated by adding one disc to each of the $K-J$ pieces.  It follows that the heuristic will always find the best $N$ filling solution for which the $J$ junctions are occupied.   If Conjecture \ref{conj:junctions}, is true, since for some $N>N'$ all the junctions will be occupied in optimal solutions, it follows that despite having not generated the optimal way for all $N \leq N'$, the heuristic generates the optimal way for all $N>N'$.

We propose a stronger conjecture than Conjecture \ref{conj:junctions}.

\begin{conjecture}
For a given $G$ and $M(G)$, there is an $N'$, such that for $N \geq N'$, the junction points are always occupied in all filling solutions that are local maxima.\label{conj:junctionsstronger} 
\end{conjecture}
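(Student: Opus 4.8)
We sketch a possible route to Conjecture~\ref{conj:junctionsstronger}. The idea is to localize the statement to a neighbourhood of each junction and to combine two ingredients: the \emph{center trap} mechanism of Section~\ref{trapproperties}, which forces a disc that lies sufficiently close to a junction to slide onto it in any local maximum, and an accumulation estimate showing that, in any all-filling local maximum with $N$ large, some disc centre is indeed that close to every junction. Recall that at a junction $q$ of $M(G)$ the single-disc objective $t\mapsto\phi$ has a first-order kink whose magnitude $\Delta_q>0$ is a fixed geometric quantity (the jump in $2\pi r_q\,\partial_t r$ plus the contribution of any corner of the path at $q$), and that the resulting sign change of $\partial_t\phi$ at $q$ is robust under small displacements of the neighbours. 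The first step would be to make this quantitative: produce a trapping radius $\rho_q>0$, depending only on the local geometry of $M(G)$ at $q$, such that whenever a disc $D$ of a filling solution lies on an incident branch within arclength $\rho_q$ of $q$ while all other disc centres avoid the open $\rho_q$-neighbourhood of $q$ in $M(G)$, one has $\partial_t\phi>0$ along the path leading from $D$ toward $q$; then $D$ cannot rest at an interior critical point, so a local maximum must place a disc exactly at $q$.

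Granting such a quantitative trap, the second step is to show that for $N\ge N'(G)$ every all-filling local maximum places a centre within $\rho_q$ of every junction $q$. Here I would induct on the tree structure of $M(G)$ using Theorem~\ref{thm:isolated}. Because $M(G)$ has finitely many monotone pieces and $\sum_i n_i=N$, some piece carries $\Omega(N)$ centres, and on a monotone piece an all-filling local maximum forces those centres to pack with spacing $O(1/N)$, so a centre lies within $O(1/N)<\rho_q$ of the junction (if any) at the high-radius end of that piece. By the first step this junction is then occupied, and Theorem~\ref{thm:isolated} splits $M(G)$ at it into independent parts, each inheriting its own disc count; one recurses on the part still containing an unoccupied junction. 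Once $N$ exceeds a threshold set by the depth of this recursion and the radii $\rho_q$, every junction is reached, hence occupied. Applying the same reasoning to the optimal filling --- itself a local maximum --- would recover Conjecture~\ref{conj:junctions}.

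The main obstacle is that the induction as stated only reaches junctions sitting at the high-radius end of some heavily occupied piece, and more generally one must rule out a macroscopic \emph{desert} --- a disc-free sub-arc of $M(G)$ --- around a junction; this is exactly what distinguishes Conjecture~\ref{conj:junctionsstronger} from Conjecture~\ref{conj:junctions}, since a suboptimal local maximum may cluster its discs away from part of $M(G)$. A disc at the edge of a desert is in equilibrium only when the radius function decreases into the desert, so a desert can surround a junction $q$ only if $q$ is a local \emph{minimum} of the radius function along every incident branch; such degenerate junctions --- where a disc placed at $q$ would rather roll off onto an incident branch --- may have to be excluded from the hypotheses or handled separately, and even for non-degenerate $q$ one still needs to exclude deserts that touch $q$ from only some sides, which requires a careful analysis of how monotone pieces concatenate around $q$. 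A secondary obstacle is to verify that the trapping radius $\rho_q$ of the first step does not collapse as neighbouring discs crowd toward $q$ at large $N$: the fixed kink $\Delta_q$ must continue to dominate the now $O(1)$ gradients of the overlap terms with those neighbours, which fails when the branches leave $q$ nearly collinearly and the radius function is nearly flat there. A complete proof thus appears to require coupling the continuum-limit density of Section~\ref{sec:contlim}, which should certify a strictly positive limiting density of centres near every non-degenerate junction, with the quantitative center-trap estimate above.
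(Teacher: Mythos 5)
The paper does not prove this statement: it is stated and left as an open conjecture, and the conclusion explicitly lists the occupation of junction points at large $N$ among the open questions. So there is no proof in the paper to compare yours against; what you have written is a research program rather than a proof, and you are candid that it is incomplete. The question is therefore whether the program is sound and where it actually breaks.

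The central gap is your first step. You treat the first-order kink $\Delta_q$ in $\phi$ at a junction $q$ as a fixed positive geometric quantity which, for a disc sufficiently close to $q$, forces $\partial_t\phi>0$ toward $q$. But the paper only claims that junctions \emph{can} act as center traps: whether the sided derivatives of $\phi$ actually change sign at $q$ depends on the overlap terms $\partial O_n/\partial x$ and $\partial O_n/\partial r$, and these are of order $r$ (not small) once neighbours are within distance $o(r)$ of the disc --- which is precisely the large-$N$ regime you need. The paper's own numerical experiment (Figures \ref{fig:sym_nmsol} and \ref{fig:rightangle_nmsol}) exhibits LR, LL, RR and MM local maxima in which discs sit on branches incident to the junction without occupying it, over large regions of parameter space; in region 6b there are six coexisting local maxima, most of which leave $J$ empty. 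So a disc near a junction is not automatically trapped. Any proof must identify which geometric condition at $q$ (angles between incident branches, one-sided slopes of the radius function) guarantees the sign change, and must show that this condition is restored, not destroyed, as neighbours crowd in at large $N$; you flag this only at the very end as a ``secondary obstacle,'' but it is the heart of the matter. The second gap you name yourself: the accumulation-and-recursion argument reaches only junctions sitting at the end of a heavily occupied monotone piece and does not exclude deserts, which is exactly the content separating Conjecture \ref{conj:junctionsstronger} from Conjecture \ref{conj:junctions}. As it stands, both load-bearing lemmas of your outline are unproven and neither follows from anything established in the paper, so the conjecture remains open.
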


If this conjecture is true, then it would also follow for $M(G)$ with $K$ pieces and $J$ junctions, where all pieces that are sections of branches are connected by pieces that are junctions, the heuristic will always self-correct and generate the optimal way for sufficiently large N.

\subsubsection{When the Heuristic Algorithm fails} 

Even if the Conjectures \ref{conj:onemax}, \ref{conj:nearby}, and \ref{conj:junctions} above hold, in practice this HA may still fail to find the optimal solution for the following reasons. 

\paragraph{(1) Assuming a \emph{way} has no local maximum}  While searching for the local maximum associated with a \emph{way}, it is common to generate the local maximum of a nearby \emph{way} instead (e.g a junction point  becomes occupied).  This leads to the conclusion that the \emph{way} has no local maximum.   However, the search may simply have been initiated outside the basin of attraction of the local maximum of the \emph{way}.    

\paragraph{(2) Searching in too small a neighborhood}  As discussed above, some optimal $N$ solutions require looking in a larger neighborhood of the $N-1$ solution. Tradeoffs that balance confidence in finding the optimal solution against the computational cost of searching larger neighborhoods may result in optimal solutions being missed. 

\paragraph{(3) Solutions are only as good as the optimization method applied}  Lastly, local maxima finding techniques can have trouble converging.  This is not a failure of the Heuristic Algorithm, per se, but occasionally affects the HA solution.  Switching which nonlinear constrained minimization optimization technique is being applied generally solves the problem.
 
\subsection{Heuristic vs. Genetic Algorithm Filling Solutions}
To assess the capability of the heuristic algorithm \emph{vs.} the genetic algorithm, solutions were generated for $N$=1 to 21 for a selection of five convex polygons and 21 concave polygons.  The putative best solutions produced by this HA match well the solutions generated by the GA.  Specifically, the HA almost always produces solutions of the same \emph{way} as the GA.  The gradient optimization technique employed by the HA is usually better at converging to a final set of disc positions for a given \emph{way} than the GA.   On rare occasions the HA and GA find different \emph{ways}.   When the HA \emph{way} is better, the GA has usually become trapped in the wrong local maximum.  When the GA \emph{way} is a better solution, we find that the \emph{way} was outside the neighborhood that was searched by the HA.
Examples of filling solutions are shown in Figure \ref{fig:fillingexamples}.
\\ \\
\begin{table} 
\begin{tabular}{|c|ccc|c|}
	\hline
 &  HA and GA  &   Best Way:  & Best Way:   &  Best $\phi$ :  \\
 & Way Match & HA & GA & HA \\
	\hline
Convex   & 98.1\%  & 1.9\% & 0\%  & 100\% \\
Concave   & 92.97\% & 3.4\% & 3.63\%  & 96.37\%\\
	\hline
\end{tabular}
\caption{Table 1.  A comparison of the filling solutions generated by the HA and GA for five convex polygons and 21 concave polygons for $N$=1-21.}
\end{table}

\begin{figure*}
\includegraphics[width=1.0\textwidth]{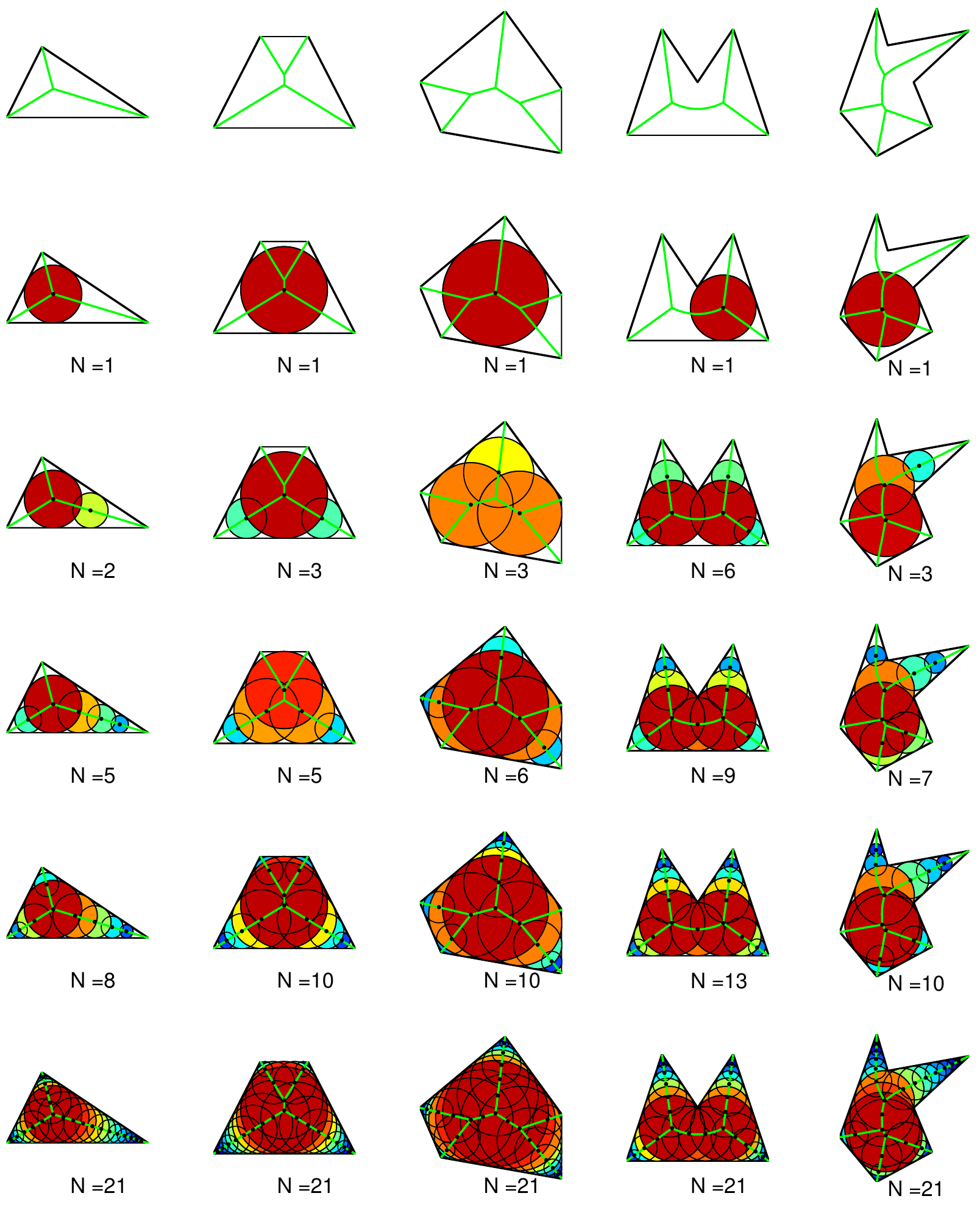}\caption{ Examples of the optimal filling solutions of three convex and two concave polygons for $N$=1-21.  The top row shows the medial axis of each polygon.\label{fig:fillingexamples}}
\end{figure*}

\section{Optimally filling a polygon as N $\rightarrow \infty$} \label{sec:contlim}
It is instructive to examine how the optimal filling of a shape converges to the total volume of the shape as $N\rightarrow \infty$.   As $N\rightarrow \infty$, the centers will be distributed densely in $M(G)$ such that the change in the density of centers measured over small intervals of the 1-manifolds $M(G)$ can be can be considered a smooth continuous function in the continuum limit.   As discussed in reference \cite{phillips}, the continuum limit solution can be solved exactly for simple polygons by analyzing the three types of branches found in simple polygons (Case (1), (2) and (3) of Figure  \ref{fig:cases}(a),  \ref{fig:cases}(b), and  \ref{fig:cases}(c)).  For the Case (3) type, no disc centered on such a curve fills any more area than what is filled by placing two discs at the ends of the curve.  Thus in optimal solutions, Case (3) type curves are empty except for their ends.  

Let $\rho(t)$ represent the density of centers along a parameterized path of $M(G)$, $r(t)$ be the radius function, and $\kappa(t)$ be the local curvature of the path, where $(x(t), y(t))$ is the parameterization $t \in [t_a,t_b]$.  Given an expression for the unfilled area $A_i$ along the path $i$ of $M(G)$ of the form, 
\begin{eqnarray}
A_i = \int^{t_b}_{t_a} C_i(\kappa, r', r)\frac{dt}{\rho^2},
\label{eq:genarea}
\end{eqnarray}
where $C_i$ is a function to be determined, we would like to determine the function $\rho$ that minimizes this area constrained by
\begin{equation}
N =  \int^{t_b}_{t_a} \rho dt.
\label{eq:fixedN}
\end{equation}
Note that if we sum the unfilled areas $A_i$ over all of $M(G)$, then $\phi = 1-\sum(A_i/A_{G})$, where $A_{G}$ is the area of G.  
This variational problem can be solved by forming the Lagrangian
\begin{equation}
\mathcal{L}[\rho(t); \lambda] =   \int^{t_b}_{t_a} \left(C_i(\kappa, r', r)\frac{1}{\rho^2} + \lambda \rho \right) dt
\end{equation}
 and taking the pointwise derivative with respect to $\rho(t)$,
\begin{equation}
\frac{\partial \mathcal{L}}{\partial \rho} =  \int^{t_b}_{t_a} \left( \frac{-2C_i(\kappa, r', r)}{\rho^3} +\frac{\partial}{\rho^2\partial \rho} C_i(\kappa, r', r) +  \lambda  \right) \delta (t-\tau)dt.
\end{equation}
This relationship is satisfied by functions $\rho$ that satisfy 
\begin{equation}
-2C_i(\kappa, r', r) +\rho \frac{\partial}{\partial \rho} C_i(\kappa, r', r) +  \rho^3\lambda  = 0
\label{eq:lagrange}
\end{equation}
Solutions of the form $ \rho =  \left(\frac{C_i(\kappa, r', r)}{\lambda}\right)^{1/3}
\label{lagrangesolution}$
 satisfy this equation.  
 
 For Case (1), where $(x(t),y(t)) = At +B$, $r = r_0t$, and $t_a > 0$,
\begin{eqnarray}
C= {\left(1-r'^2\right)^{3/2}}/{\left(12r\right)}
\label{eq:nocurve}
\end{eqnarray}
as shown in Appendix A-1.  It follows that $\rho = \propto r^{-1/3}$.

For Case (2),  where $(x(t),y(t)) = (2r_0t,r_0t^2)$, $r = r_0(t^2 + 1)$, $r_0$ is the minimum of the radius function, and $\kappa(t) = \left({2r_0}\right)^{-1} \left(1+t^2\right)^{-3/2}$, 
\begin{equation}
C =  \frac{1}{12}\left(\frac{r_0\kappa}{r}\right)
=   \frac{1}{24r_0}\left(\frac{1}{1+t^2}\right)^{5/2}
\end{equation}
as shown in Appendix A-2.  It follows that  $\rho = \rho_0  \left(\frac{1}{1+t^2}\right)^{5/6}$ or  $\rho \propto r^{-5/6}$.  

For both Case (1) and Case (2), the distribution of centers follows a power law with respect to the local radius function.  Centers on $M(G)$ will be distributed more densely where the radius function is smaller.   
Given $\rho = \rho_0r^{-\alpha}$, for $\alpha$ = 1/3 or 5/6,  $\rho_0$ can be determined from Equation \ref{eq:fixedN},
\begin{eqnarray}
 \rho_0=&N \left(\int^{t_a}_{t_b}  r^{-\alpha} dt\right)^{-1} \\
  \rho_0=&N/R_0.
\end{eqnarray}
$R_0$ is then a constant determined by the radius function of the branch section of $M(G)$.
  
For Case (1), the distribution of centers on the medial axis path is also \emph{scale-free}.   The distribution of centers also follows a power law with respect to the distance from the vertex (where $t=0$) of the polygon.
  
Equation \ref{eq:genarea} becomes,
\begin{eqnarray}
A = \frac{1}{N^2}\int^{t_b}_{t_a} R_0^2 C(\kappa, r', r)dt =  \frac{1}{N^2} \mathcal{C}.
\end{eqnarray}

Thus, in the continuum limit the optimal filling solution converges to the area of the shape with an asymptotic error proportional to $N^{-2}$  for ideally distributed centers.  We presume that all shapes that can be approximated by simple polygons with an increasing number of sides also converge with an $N^{-2}$ error term. 

If we divide $M(G)$ into $k$ branch sections we can predict what fraction of the discs ($N_i/N$) will be distributed over each branch $i$ as $N\rightarrow\infty$.  
\begin{eqnarray}
A = \sum^k_1 A_i(N_i) \\ N = \sum^k_1 N_i
\end{eqnarray}
Since we have distributed our discs optimally, we can treat $A_i(N)$ as a continuous function and thus
\begin{equation}
\frac{\partial{A_i}}{\partial N_i} - \frac{\partial{A_j}}{\partial N_j} =0 , \forall j \neq i
\end{equation}

Arbitrarily setting $j=k$,

\begin{equation}
\frac{\partial{A_i}}{\partial N_i} - \frac{\partial{A_k}}{\partial N_k} =  -2\frac{\mathcal{C}_i}{N_i^3} +  2\frac{\mathcal{C}_k}{N_k^3} = 0
\end{equation}

\begin{equation}
N_i =  \left(\frac{\mathcal{C}_i}{\mathcal{C}_k}\right)^{1/3}N_k.
\end{equation}

The fraction of discs on a given branch $i$ is,
\begin{equation}
f_i = \frac{N_i}{N} = \frac{(\mathcal{C}_i)^{1/3}}{ (\mathcal{C}_1)^{1/3} + (\mathcal{C}_2)^{1/3} + ... + (\mathcal{C}_k)^{1/3}}. \label{gendist}
\end{equation}

For a triangle, which is always composed of three Case (1) branches, the fraction of the discs on a given path can be solved analytically to be
\begin{equation}
f_i =  \frac{\textrm{cot}(\theta_i)}{ \textrm{cot}(\theta_1)+ \textrm{cot}(\theta_2) +\textrm{cot}(\theta_3)} \label{triangledist}
\end{equation}
where $\theta_i$ is an internal angle of the triangle, each of which is associated with a branch.  From equation \ref{triangledist}, it is clear that the optimal solution preferentially populates medial axis branches associated with smaller internal angles.  This can be observed in the optimal filling of a triangle in Figure \ref{fig:triangle}.

\begin{figure*}[htb]
\begin{center}
\includegraphics[width=1.0\textwidth]{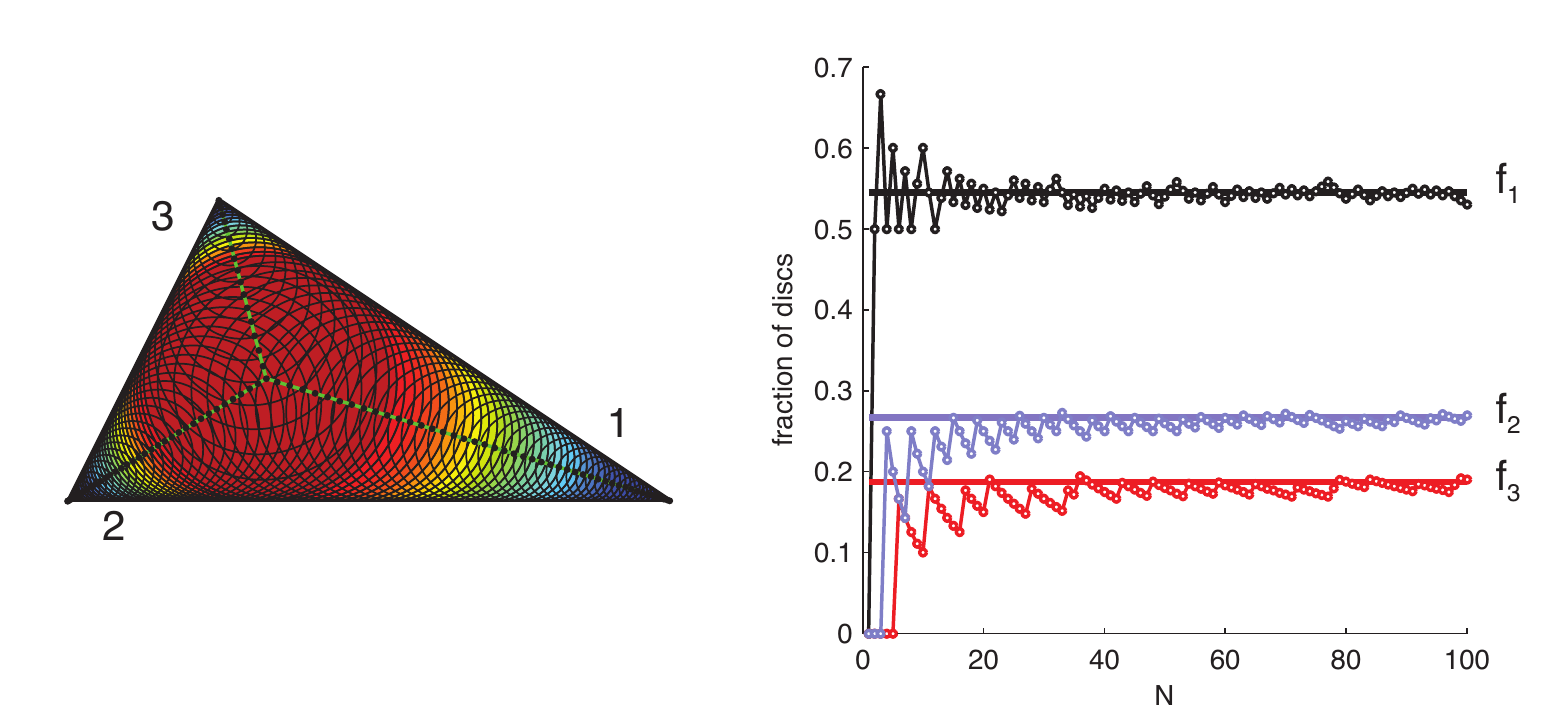}
\caption{The triangle on the left is filled with 100 discs.  On the right is the fraction of the discs  on each branch for $N$ = 1-100, compared to the prediction per equation \ref{triangledist}}.
\label{fig:triangle}
\end{center}\end{figure*}

An illuminating case to consider is a branch of $M(G)$ where both the curvature of the path and the radius function are constant. In Appendix A-3, we find
$C = \frac{1}{12}\left(\kappa^2 r + \frac{1}{r}\right)$, or a constant over the branch.   It immediately follows that $\rho = N/T$, where T is the length of the branch.  As expected, centers are distributed evenly over the branch.  For this case, $\mathcal{C} = T^3C$.    Thus, via equation \ref{gendist} $M(G)$ that can be divided into branches of constant curvature and radius functions also have known distributions as $N\rightarrow \infty$, and branches with higher curvatures will be more densely populated.

\section{Conclusion} \label{sec:conclusion}
In this paper we investigated the new problem introduced in reference reference \cite{phillips} of optimally filling shapes with balls of varying radius.   Filling combines two classic mathematical problems, the packing of shapes\cite{springerlink:10.1007/s00454-010-9254-3,springerlink:10.1007/s00454-010-9273-0,springerlink:10.1007/s00454-005-1198-7,springerlink:10.1007/s00454-005-1196-9,springerlink:10.1007/BF02574046,springerlink:10.1007/BF02187713} and the covering of space\cite{springerlink:10.1007/BF02187713,springerlink:10.1007/s00454-005-1202-2,springerlink:10.1007/s00454-009-9203-1,springerlink:10.1007/BF02187679,springerlink:10.1007/s00454-004-2916-2}.  Like in packing problems, the balls cannot overlap the boundary of the shape, but, like in covering, the balls may overlap each other without penalty.  This combination of constraints generates an interesting new problem.

In our research, filling solutions arise from the problem of modeling anisotropic nanoparticles as rigid bodies composed of a sum of isotropic volume-excluding potentials~\cite{PhysRevLett.95.056105,citeulike:1065652,doi:10.1021/nn901725b,doi:10.1021/nn203067y}.   Filling solutions have applications to many other areas of optimization, including the problem of irradiating a tumor with the fewest number of beam shots, while controlling the beam diameter, but without damaging surrounding tissue~\cite{Bourland}; using time-delayed sources to create shaped wavefronts; combining precision-placed explosives with tunable blast radii; positioning proximity sensors with defined radii; cell phone and wireless network coverage; or any problem of ablation or deposition where one has a sharp impenetrable boundary and a radially tunable tool.  

We find the filling problem to be surprisingly rich.  This paper describes the basic structure of the filling problem in arbitrary dimensions.  For polygons we have provided a deeper description of the solution space and detailed two methods for finding numeric approximations of the optimal filling solutions.  We also have shown how optimal solutions in polygons converge to simple analytical expressions as the number of discs approaches infinity.  

The solution space structure of a simple polygon has features that we expect to find in more generalized and higher dimensional shapes, namely first-order continuous manifolds that join at lower dimension manifolds where centers are trapped.  We predict that higher dimensional polytopes will also have manifolds with scale-free solutions.  

Even in two-dimensions, many open questions remain.  For example, how can optimal solutions be found for a generalized shape $G$ that is not a simple polygon?  Also, for a given shape $G$, is there a $N'$, such that for $N >N'$ junction points are always occupied by centers in optimal solutions?  Many of the potential applications of the filling problem demand solutions for three-dimensional shapes.  It is desirable to develop practical methods for finding optimal solutions in higher dimensions.  

\section{Acknowlegements}
We acknowledge Suresh Krishnan for software help and Greg Huber, Amir Haji-Akbari, and Michael Engel for interesting discussions. SCG and CLP acknowledge support by the U.S. Department of Energy, Office of Basic Energy Sciences,  Division of Materials Sciences and Engineering under Award DE-FG02-02ER46000. This research supported in part by the DOD/ASD(R\&E) under Award No. N00244-09-1-0062; any opinions, findings, and conclusions or recommendations expressed in this publication are those of the author(s) and do not necessarily reflect the views of the DOD/ASD(R\&E).  CLP also acknowledges support by the U.S. Department of Energy Computational Science Graduate Fellowship.

\pagebreak
\section{Glossary} \label{sec:glossary}

\paragraph{maximal ball}  A ball contained completely in a shape $G$ that is not a proper subset of any other ball also contained in $G$.  Also, a ball tangent to the surface of $G$ at at least two points, that is completely contained in $G$.  In a 2D planar shape, a maximal ball is a maximal disc.
\paragraph{medial axis} $M(G)$, the locus of the centers of all maximal balls of $G$.
\paragraph{radius function}  The radii of the maximal balls of a shape $G$.
\paragraph{normal point} A point on $M(G)$ that is the center of a maximal disc in contact with the boundary $S$ at exactly two separate but contiguous sets of points.
\paragraph{end point} A point on $M(G)$ that is the center of a maximal disc in contact with the boundary $S$ at exactly one contiguous sets of points.
\paragraph{branch point} A point on $M(G)$ that is the center of a maximal disc in contact with the boundary $S$ at three or more separate but contiguous sets of points.
\paragraph{branch}  A set of contiguous normal points on a medial axis.
\paragraph{parent of a branch} The two contiguous parts of $S$ from which the normal points of the branch are derived.  For a simple polygon, parents can be a polygon edge or a reflex point.
\paragraph{neighbor} If a maximal disc has a disc center that can be reached by a path along $M(G)$ starting at the center of maximal disc $A$ without traversing a third disc center, then it is the neighbor of maximal disc $A$.
\paragraph{center trap}  A point on $M(G)$ where a first-order discontinuity coupled with a local maximum in $\phi$ (all centers fixed) creates a local maximum that is stationary with respect to small changes in the position of the neighboring discs.
\paragraph{junction point} A point on $M(G)$ that can act as a center trap.  For a polygons, branch points are junctions.  Whether a generalized planar graph can have junction points that are not branch points we leave as an open question.
\paragraph{piece}  A junction point  or a section of a branch.
\paragraph{way}  A distribution of $N$ discs over the $K$ pieces that compose $M(G)$.
\paragraph{part} A connected set of pieces only connected to pieces not of the set by disc-center occupied junctions.

\pagebreak
\

\renewcommand{\figurename}{Fig. A.}
\setcounter{figure}{0}
\renewcommand{\theequation}{A.\arabic{equation}}
\setcounter{equation}{0}

\section{Appendix A-1: Distribution function along a medial axis branch with no curvature and a linear radius function}

\begin{figure}[h!]
\center
\includegraphics[width=0.75\textwidth]{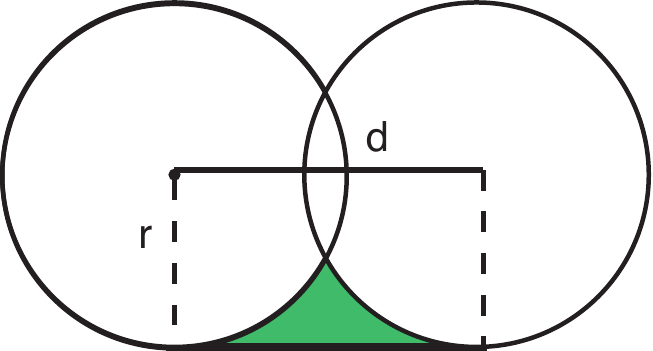}
\caption{The area shaded in green is the uncovered area between two discs of the same radius and the polygon edge. \label{fig:AreaNoCurvature}}
\end{figure}

We calculate the area between two discs along a medial axis branch generated by two polygon edge parents as the two discs approach each other.

Figure A.\ref{fig:AreaNoCurvature} shows two overlapping maximal discs, of the same radius, separated by a distance $d$, and one of the two lines tangent to both discs. The green region is the uncovered area in between the discs and the tangent line.   As $d\rightarrow 0$, what is the area, $A$ of the green region?  

\begin{eqnarray}
A &=& \textstyle{rectangle} - \textstyle{2 quarter circles} +{\textstyle\frac{1}{2}} \textstyle{lens}\\
   &=& dr - \frac{\pi}{2}r^2 + r^2 cos^{-1}\left(\frac{d}{2r}\right) - \frac{dr}{2}\sqrt{1-\left(\frac{d}{2r}\right)^2}
\end{eqnarray}

Using an acosine Taylor expansion and the square root
\begin{eqnarray}
A  &\simeq& dr - \frac{\pi}{2}r^2 + r^2 \left(\frac{\pi}{2} -\frac{d}{2r} -\frac{1}{6}\left(\frac{d}{2r}\right)^3  -\frac{3}{40}\left(\frac{d}{2r}\right)^5  \right) \\ 
&-& \frac{dr}{2}\left(1-\frac{1}{2}\left(\frac{d}{2r}\right)^2   -\frac{1}{8}\left(\frac{d}{2r}\right)^4\right) \\
A &=& \frac{d^3}{24r} + O(d^5)
\end{eqnarray}

We now approximate the uncovered area between two discs and the polygon edges for a radius function is that is not a constant, by bounding the answer between an upper and lower bound.
\begin{figure}[h!]
\center
\includegraphics[width=0.5\textwidth]{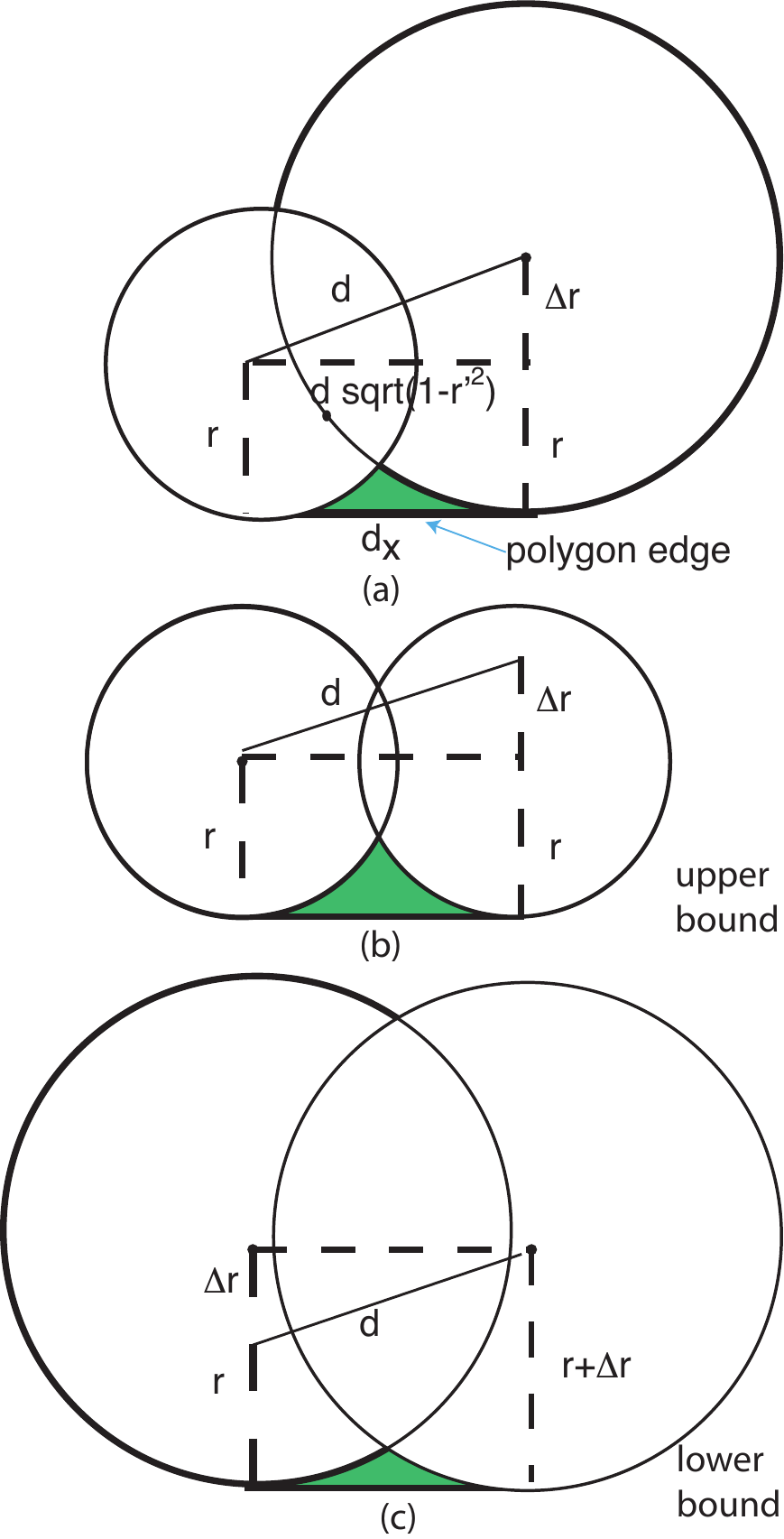}
\caption{(a) The area shaded in green is the uncovered area between two discs of different radius and the polygon edge.  (b) The area between two small circles and (c) two large circles provide and upper and lower limit for the shaded area. \label{fig:AreaNoCurvature_secondway}}
\end{figure}
Make one of the discs of Figure A.\ref{fig:AreaNoCurvature} larger by $\Delta r = dr'$, as per the Figure A.\ref{fig:AreaNoCurvature_secondway}(a).    The distance between the two centers is still defined as $d$.  
Put a cotangent disc of radius $R$ at the point of tangency of the both discs per Figure A.\ref{fig:AreaNoCurvature_secondway}(b) and Figure \ref{fig:AreaNoCurvature_secondway}(c).  If $R = r$ or if $R = r+dr'$, the centers are now $d\sqrt{1-r'^2}$ apart.  The uncovered area of  Figure A. \ref{fig:AreaNoCurvature_secondway}(a) is bound between  Figure A. \ref{fig:AreaNoCurvature_secondway}(b) and  Figure A. \ref{fig:AreaNoCurvature_secondway}(c) or between $\frac{1}{24}\frac{d^3(\sqrt{1-r'^2})^3}{r}$ and $\frac{1}{24}\frac{d^3(\sqrt{1-r'^2})^3}{r+dr'}$.  As $d\rightarrow 0$, the area uncovered is 
\begin{equation}
A_{uncovered} \simeq \frac{1}{24}\frac{d^3(\sqrt{1-r'^2})^3}{r}. \label{circlestangent}
\end{equation}
The area is then doubled to account for the identical uncovered piece on the other side due to the other tangent line (i.e. polygon edge).

We now observe that $d = \frac{1}{\rho}$ where $\rho$ is the density of disc centers along the branch.  To determine the total uncovered area along a branch of length $T$, we would sum all the uncovered areas that are at density $\rho$ along the branch, or
\begin{equation}
A = \int^T_0 \frac{\left(1-r'^2\right)^{3/2}\rho dt}{12r\rho^3} =  \int^T_0 \frac{\left(1-r'^2\right)^{3/2} dt}{12r\rho^2}
\end{equation}

\section{Appendix A-2: Distributions along the parabolic medial axis branch of a polygon }
\begin{figure}[h!]
\center
\includegraphics[width=0.5\textwidth]{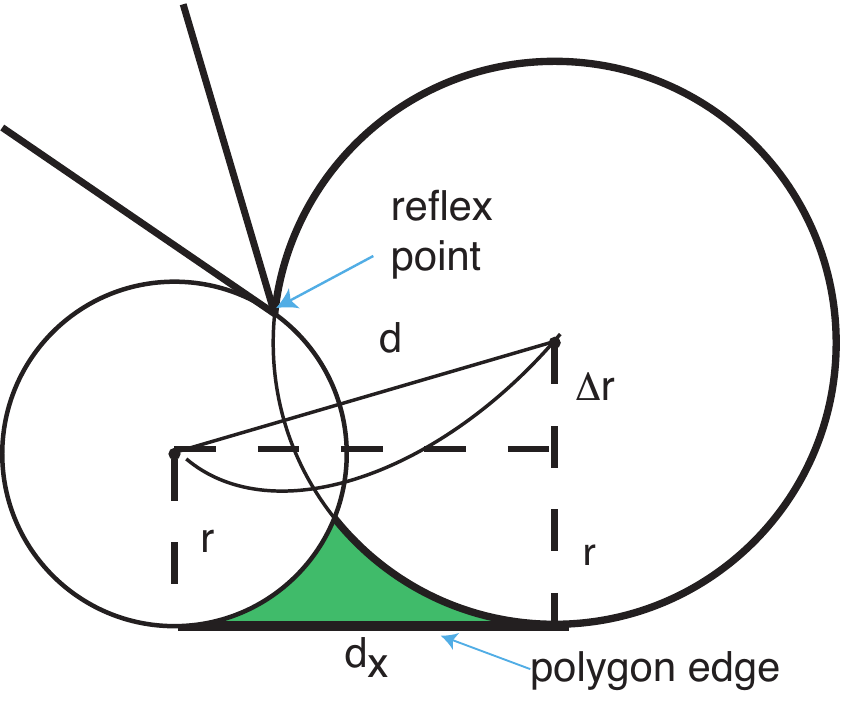}
\caption{The area shaded in green is the uncovered area between two discs of the different radius and the polygon edge along a parabolic path.\label{fig:Parabola}}
\end{figure}

For concave polygons, the medial axis branch associated with discs tangent to the reflex point and an edge of the polygon is a parabolic curve.  The reflex point forms the focus of the parabola and the polygon edge the directrix.  If the ends of such a parabolic curve are occupied, all the uncovered area is between the directrix and the set of overlapping discs distributed along the branch.  The parabola has both a changing radius function and changing curvature along the branch.  

First we note that for two discs that have centers of distance $d$ apart, the uncovered area between the discs and the directerix is the same as Equation \ref{circlestangent}.
\begin{equation}
 \textrm{Area}  \simeq \frac{d^3}{24r}\left(1-r'^2\right)^{3/2} 
\end{equation}
 
The local density of centers is  $\rho = 1/\Delta s$, where $\Delta s$ is the arc-length between the two centers.  However, as $d\rightarrow 0$, $d\approx \Delta s$ so,

\begin{equation}
 \textrm{Area}  \simeq \frac{1}{24\rho^3r}\left(1-r'^2\right)^{3/2} . \label{eqn:parabola}
\end{equation}

Using a parameterization of the parabola, where $y = at^2$, x = $2at$, $r = at^2 + a$, the arc length $s(t) = a \left(t\sqrt{1+t^2} + \textrm{sinh}^{-1}t \right)$,  and the curvature $\kappa(t) = \frac{1}{2a} \left(1+t^2\right)^{-3/2}$, then,

\begin{equation}
\frac{d r}{ds} = \frac{d r}{dt}\frac{dt}{ds} = \frac{2at}{2a\sqrt{1+t^2}} = \frac{t}{\sqrt{1+t^2}}
\end{equation}

Note that $r' < 1$, which is a general property of $r'$ of a medial axis.  

So,
\begin{equation}
(1-r'^2)^{3/2} = \left(\frac{1}{1+t^2}\right)^{3/2} = 2a\kappa.
\end{equation}

Now Equation \ref{eqn:parabola} is equal approximately to 
\begin{equation}
 \textrm{Area}  \simeq \frac{1}{24 \rho^3}\left(\frac{2r_0\kappa}{r}\right)
\end{equation}
where $r_0$ is the smallest radius of the parabola, or $r_0 = a$.  Or,

\begin{equation}
 \textrm{Area}  \simeq \frac{1}{24r_0 \rho^3}\left(\frac{1}{1+t^2}\right)^{5/2}.
\end{equation}

If the parabola is defined from $t_a$ to $t_b$, then the total uncovered area is 
\begin{equation}
\int^{t_b}_{t_a} \textrm{Area} \cdot \rho dt =  \int^{t_b}_{t_a}  \frac{1}{24r_0 \rho^2}\left(\frac{1}{1+t^2}\right)^{5/2}dt.
\end{equation}

\section{Appendix A-3: Constant curvature, constant radius function}
Consider the case of two identical discs separated by a branch of length $d$ and curvature $\approx \kappa$.  What is the uncovered area between them?  See  Figure A.\ref{construct}.  The uncovered area is the area swept between the two arcs tangent to the discs minus the area covered by the discs within the area swept, or twice the green area of  Figure A.\ref{construct}b.    Let $\theta$ be the angle $\angle ABC$, $\theta_c$ be angle $\angle DCE$.  Let $d_{cl}$ be the chord length between $A$ and $C$.  Then $\theta = d\kappa$, $d_{cl}$ = $\frac{2}{\kappa}\textrm{sin}\left(\frac{d\kappa}{2}\right)$, and $\theta_c = 2 \textrm{cos}^{-1}\left( \frac{1}{\kappa r} \textrm{sin}\left(\frac{d\kappa}{2}\right)\right)$.    The area of the green region of  Figure A.\ref{construct}b is equal to the half disc minus the grey area, or $\frac{\pi r^2}{2} - \frac{r^2}{2}(\theta_c - sin(\theta_c))$.
 
  \begin{figure}[h!]
\begin{center}
\includegraphics[width=3in]{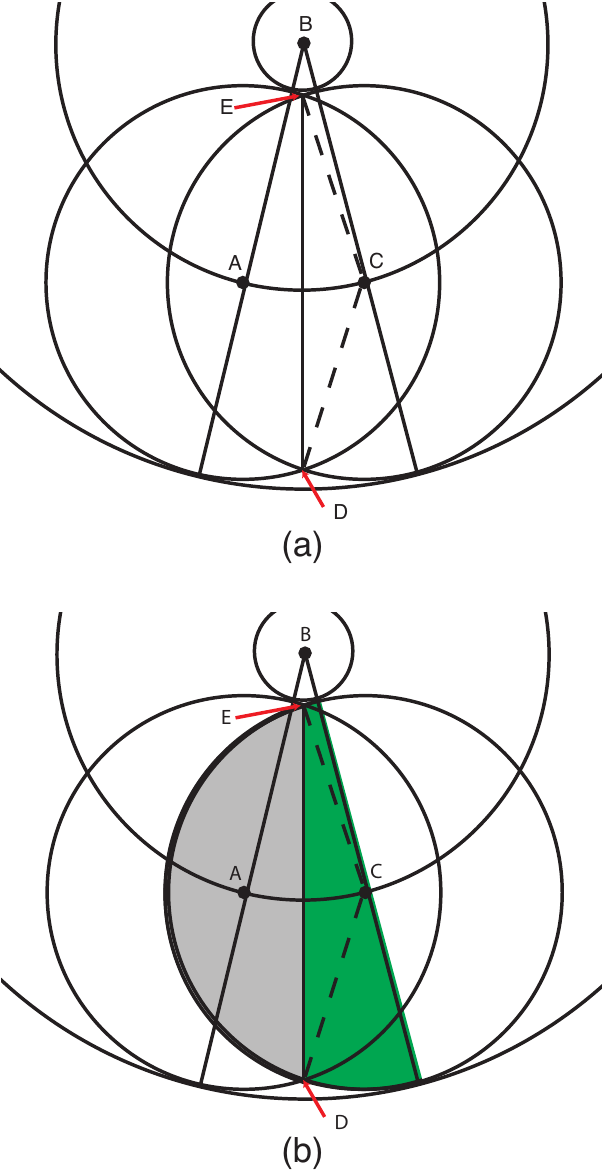}
\caption{The uncovered area between two discs on a branch of constant radius function, constant curvature.}
\label{construct}
\end{center}\end{figure}

 \begin{eqnarray}
 A_{uncovered} &=& \left(\left(\frac{1}{\kappa }+ r\right)^2 - \left(\frac{1}{\kappa }- r\right)^2\right)\frac{d\kappa}{2} - \pi r^2 + r^2(\theta_c - \textrm{sin}(\theta_c)) \\
 & =& 2rd - \pi r^2 + r^2(\theta_c - \textrm{sin}(\theta_c))
 \end{eqnarray}
 
 Expanding the third term
 
\begin{eqnarray}
&r^2\theta_c - r^2\textrm{sin}(\theta_c) \\
&2r^2 \textrm{cos}^{-1}\left( \frac{1}{\kappa r} \textrm{sin}\left(\frac{d\kappa}{2}\right)\right) - r^2 \textrm{sin}\left(2 \textrm{cos}^{-1}\left( \frac{1}{\kappa r} \textrm{sin}\left(\frac{d\kappa}{2}\right)\right)\right) \\
&2r^2 \textrm{cos}^{-1}\left( \frac{1}{\kappa r} \textrm{sin}\left(\frac{d\kappa}{2}\right)\right) - 2r^2 \textrm{sin}\left( \textrm{cos}^{-1}\left( \frac{1}{\kappa r} \textrm{sin}\left(\frac{d\kappa}{2}\right)\right)\right) \left( \frac{1}{\kappa r} \textrm{sin}\left(\frac{d\kappa}{2}\right)\right)\\
&2r^2\left( \textrm{cos}^{-1}\left( \frac{1}{\kappa r} \textrm{sin}\left(\frac{d\kappa}{2}\right)\right) - \sqrt{1 -\left( \frac{1}{\kappa r} \textrm{sin}\left(\frac{d\kappa}{2}\right)\right)^2} 
\left( \frac{1}{\kappa r} \textrm{sin}\left(\frac{d\kappa}{2}\right)\right)
\right)
\end{eqnarray}

Substituting a Taylor series expansion for the inverse cosine and square root
\begin{eqnarray}
&2r^2\left( \pi/2 - 
\left( \frac{1}{\kappa r} \textrm{sin}\left(\frac{d\kappa}{2}\right)\right) -
\frac{1}{6}\left( \frac{1}{\kappa r} \textrm{sin}\left(\frac{d\kappa}{2}\right)\right)^3 \right)-\\
& 2r^2\left(1 -\frac{1}{2}\left( \frac{1}{\kappa r} \textrm{sin}\left(\frac{d\kappa}{2}\right)\right)^2 \right)
\left( \frac{1}{\kappa r} \textrm{sin}\left(\frac{d\kappa}{2}\right)\right)
 \\
&=2r^2\left( \pi/2 - 
\left( \frac{2}{\kappa r} \textrm{sin}\left(\frac{d\kappa}{2}\right)\right) +
\frac{2}{3}\left( \frac{1}{\kappa r} \textrm{sin}\left(\frac{d\kappa}{2}\right)\right)^3 \right)
\end{eqnarray}
Thus,
\begin{eqnarray}
A_{uncovered} &=&  2rd  - 
 \frac{4 r}{\kappa} \textrm{sin}\left(\frac{d\kappa}{2}\right) +
\frac{1}{\kappa^3 r} \frac{4}{3}\left( \textrm{sin}\left(\frac{d\kappa}{2}\right)\right)^3 
\end{eqnarray}

Substituting a Taylor series expansion for the sine terms.
\begin{eqnarray}
A_{uncovered} &=&  2rd  - 
 \frac{4 r}{\kappa} \left( \left(\frac{d\kappa}{2}\right) - \frac{1}{6}\left(\frac{d\kappa}{2}\right)^3\right) +
\frac{1}{\kappa^3 r} \frac{4}{3}\left(\frac{d\kappa}{2}\right)^3 \\
&=& \frac{1}{12} d^3\kappa^2 r + \frac{1}{12}\frac{d^3}{r}
\end{eqnarray}

So the uncovered area along the whole length of the branch is  ($d = \frac{1}{\rho}$)

\begin{equation}
A = \int^T_0 \frac{1}{12}\left(\kappa^2 r + \frac{1}{r}\right)\frac{dt}{\rho^2} = \frac{T^3}{12N^2}\left(\kappa^2 r + \frac{1}{r}\right)
\end{equation}
where $N$ is the total number of discs distributed over the branch section.
Using equations (10) and (14) of the main paper, if $M(G)$ can be broken into branch sections with approximately constant $r$ and curvature, then the fractional distribution of the $N$ discs over each section can be determined such that the fraction $f_k$ of discs distributed over a section $k$ of length $T_k$ is 

\begin{equation}
f_k  \propto  T_k\left(  \kappa_k^2r_k +\frac{1}{r_k}    \right)^{1/3}  
\end{equation}

We observe that, in general, the density of discs is higher in regions of high curvature.  

\section{Appendix A-4: How many ways?} 

Per conjecture \ref{conj:onemax}, to find the optimal $N$ filling solution, a maximum must be generated for every way of $N$ discs and $K$ pieces.   How many maxima searches is this?   Assume that the $K$ pieces have $J$ junctions, $J < K$.    For $m \in \mathbb{N}, 0 \leq m \leq J$, there are ${J \choose m}$ ways to occupy the junctions, leaving $N-m$ remaining discs to allocate over the $K-J$ remaining pieces.    A weak composition is a way of partitioning an integer into a sequence of non-negative integers, where order matters.  The number of weak compositions of $N-m$ discs over $K-J$ pieces is ${{N-m+K-J-1} \choose {K-J-1}}$.

Thus the number of ways to be searched is equal to  $\sum_{m=0}^{min(J,N)}{J \choose m} {{N-m+K-J-1} \choose {K-J-1}}$.

\begin{eqnarray}
&&\sum_{m=0}^{min(J,N)}{J \choose m} {{N-m+K-J-1} \choose {K-J-1}}\\ 
&=& \sum_{m=0}^{J} \frac{J! (N-m+K-J-1)!}{m! (J-m)! (K-J-1)! (N-m)!} \\
&=& O\left(N^{K-J-1}\right)
\end{eqnarray}


\bibliographystyle{spphys}       


\end{document}